\documentclass[11pt]{amsart}

\usepackage{amscd,amssymb,amsthm,amsmath,amssymb,mathrsfs,enumerate,enumitem,hyperref}
\usepackage[matrix,arrow,curve]{xy}
\usepackage[margin=2cm]{geometry}
\usepackage[matrix,arrow,curve]{xy}

\allowdisplaybreaks
\numberwithin{equation}{section}

\theoremstyle{plain}
\newtheorem{theorem}{Theorem}[section]

\newtheorem{lemma}[theorem]{Lemma}
\newtheorem{corollary}[theorem]{Corollary}
\theoremstyle{definition}
\newtheorem{definition}[theorem]{Definition}
\theoremstyle{remark}
\newtheorem{remark}[theorem]{Remark}

\theoremstyle{definition}
\newtheorem{example}[theorem]{Example}

\DeclareMathOperator{\Pic}{Pic}
\DeclareMathOperator{\NS}{NS}
\DeclareMathOperator{\Jac}{Jac}
\DeclareMathOperator{\MW}{MW}
\DeclareMathOperator{\Bl}{Bl}
\DeclareMathOperator{\Gr}{Gr}
\DeclareMathOperator{\Gal}{Gal}

\newcommand{\C}{\mathbb C}
\newcommand{\F}{\mathbb F}
\newcommand{\Pp}{\mathbb P}
\newcommand{\Q}{\mathbb Q}
\newcommand{\Z}{\mathbb Z}
\newcommand{\cL}{\mathcal L}
\newcommand{\cO}{\mathcal O}
\newcommand{\cU}{\mathcal U}
\newcommand{\kbar}{\overline{\Bbbk}}

\title[Unirational del Pezzo surfaces of degree one]{Unirational del Pezzo surfaces of degree one}

\author{Ivan Cheltsov}
\address{Ivan Cheltsov\\
University of Edinburgh, Edinburgh, Scotland}
\email{I.Cheltsov@ed.ac.uk}

\author{Konstantin Loginov}
\address{Konstantin Loginov\\
Steklov Mathematical Institute, Moscow, Russia}
\email{loginov@mi-ras.ru}

\author{Dmitri Orlov}
\address{Dmitri Orlov\\
Steklov Mathematical Institute, Moscow, Russia}
\email{orlov@mi-ras.ru}

\author{Yuri Prokhorov}
\address{Yuri Prokhorov\\
Steklov Mathematical Institute, Moscow, Russia}
\email{prokhoro@mi-ras.ru}

\subjclass[2020]{14J26, 14J27, 14G05}

\date{}

\begin{document}

\begin{abstract}
We construct explicit unirational del Pezzo surfaces of degree $1$ with
arithmetic Picard rank one over $\mathbb{Q}$, $\mathbb{F}_5$, and $\mathbb{C}(t)$. Moreover, we prove that
every smooth real geometrically rational surface is unirational over $\mathbb{R}$ if and only if it has a~real point.
\end{abstract}

\maketitle

\section{Introduction}
\label{sec:intro}

Let $\Bbbk$ be a~perfect field of characteristic different from $2$ and $3$, and let $S$ be a~smooth del Pezzo surface that is defined over $\Bbbk$.
Then $d=(-K_S)^2$ is said to be the degree of the surface $S$. If $S(\Bbbk)\ne\varnothing$ and~$d\ge 5$, then $S$ is $\Bbbk$-rational. If $d\le 4$ and $S$ is \textit{minimal}, then $S$ is irrational \cite{IskovskikhMinimal}. If $S(\Bbbk)\neq\varnothing$ and $d\in\{3,4\}$, then $S$ is always unirational  \cite{Segre43,KollarCubic}. 

In \cite{SalgadoTestaVarilly}, Salgado, Testa, and V\'arilly-Alvarado extended Manin's result \cite[Theorem~29.4]{ManinCubicForms} to prove that $S$ is also unirational in the~case when $d=2$ and $S(\Bbbk)$ contains a~point away from the~ramification curve of the~anticanonical double cover that is not a~generalized Eckardt point. 

\begin{remark}
\label{remark:dP2-finite}
Suppose that $d=2$ and $\Bbbk$ is a finite field. Then $S$ is unirational \cite{FestiVanLuijk}.
\end{remark}

\begin{remark}
\label{remark:dP2-real-intro}
Suppose that $d=2$, $\Bbbk=\mathbb{R}$ and $S(\mathbb{R})\ne\varnothing$. Then $S$ is unirational by Corollary~\ref{corollary:dP2-real}.
\end{remark}

In this paper, we study the~unirationality of $S$ in the~remaining case $d=1$. So, we suppose that $d=1$.
Then we have $S(\Bbbk)\ne\varnothing$, because the~base locus of the~pencil $|-K_S|$ is a~$\Bbbk$-point.
Moreover, if $\rho(S)\ne 1$, then $S$ is $\Bbbk$-unirational by \cite[Corollary~8]{KollarMella}, where $\rho(S)=\mathrm{rk}\,\mathrm{Pic}(S)$.

\begin{remark}
If $\Bbbk$ is infinite and $S(\Bbbk)$ is not Zariski dense, $S$ is not unirational over $\Bbbk$. But we do not know whether $S(\Bbbk)$ is always Zariski dense when $\Bbbk$ is infinite, while many such (Zariski dense) examples have been constructed in \cite{DesjardinsWinter}.
\end{remark}

In \cite{DesjardinsWinter}, Desjardins and Winter mentioned that no del Pezzo surfaces of degree $1$ with Picard rank one are known to be unirational or non-unirational. The goal of this paper is to partially close this gap. Namely, we provide several examples of unirational del Pezzo surfaces of degree $1$ with Picard rank~one. Unfortunately, we do not know whether there exists a~del Pezzo surface of degree $1$ that is not unirational.

We first present three unirational examples over $\Q$, $\F_5$ and $\C(t)$. Over $\Q$, we set
$$
S_{\Q}=\Big\{y^2=x^3+uv^2(3u-v)x+\frac14uv\bigl(4v^4-23uv^3-18u^2v^2+u^3v-4u^4\bigr)\Big\}\subset \Pp_{\Q}(1_u,1_v,2_x,3_y).
$$
Then $S_{\Q}$ is a~smooth del Pezzo surface of degree $1$. One can show that $\rho(S_{\Q})=1$. We prove

\begin{theorem}
\label{thm:explicit-Q}
There is a~dominant rational map $\Pp^2_{\Q}\dashrightarrow S_{\Q}$ of degree $9$.
\end{theorem}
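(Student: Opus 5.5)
The plan is to exhibit an explicit family of rational curves on $S_{\Q}$ defined over $\Q$, parametrized by a rational base. This family then gives a dominant map $\Pp^1\times\Pp^1\dashrightarrow S_{\Q}$, which is birationally a map from $\Pp^2_{\Q}$. The starting point is the elliptic fibration given by the pencil $|-K_{S_{\Q}}|=\{\lambda u+\mu v=0\}$. Blowing up its base point $O=[0:0:1:1]$ gives a rational elliptic surface $\mathcal E\to\Pp^1_{[u:v]}$ with zero section $O$, whose Weierstrass model is exactly the displayed equation with $t=u/v$.

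\textbf{Step 1: a rational trisection.} The special coefficients of the equation were chosen for a reason, and I would use them here. I would look for a rational curve $C\subset S_{\Q}$, defined over $\Q$, that meets the general fibre in $3$ points, so that $C\to\Pp^1_{[u:v]}$ has degree $3$. The natural candidate is a curve cut out by $x=q(u,v)$ for a suitable quadratic form $q$ (or a curve in $|-2K_{S_{\Q}}|$ passing through $O$). The condition is that substituting this gives $y^2=(\text{square})\cdot(\text{cubic in }t)$, with the remaining factor defining a rational curve. I expect the coefficients $3u-v$ and $\frac14(\dots)$ to make $x^3+uv^2(3u-v)x+\dots$ factor favourably on such a locus, and checking this is a finite computation. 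Since $\rho(S_{\Q})=1$, the curve $C$ cannot be a $(-1)$-curve or a conic; a singular rational member of some $|-nK_S|$ is exactly the kind of curve still allowed.

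\textbf{Step 2: the fibrewise map.} I would form the base change $\mathcal E_C=\mathcal E\times_{\Pp^1}C$. Over $C\cong\Pp^1_s$, this surface has the zero section and the tautological section $\sigma$ coming from $C$ itself. I would then use fibrewise group-law operations to produce a family of rational curves. Concretely, let $c_1,c_2,c_3$ be the three points of $C$ on the fibre $F_t$ (Galois-conjugate over $\Q(t)$). For a point $p=(t,\xi)$ of a rational parameter surface, I would send it to the point $c_1\boxplus\cdots$ of $F_t$ determined by a linear combination of $\sigma$ and a second rational curve, for instance the tangent-line construction through $\sigma(s)$. Each choice of $s\in C$ sweeps out a rational curve in $S_{\Q}$, and varying $s$ these curves cover $S_{\Q}$. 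The parameter space is $C\times\Pp^1\simeq\Pp^2$ birationally.

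\textbf{Step 3: the degree.} Through a general point of $S_{\Q}$, I would count how many members of the family pass and with what multiplicity. The natural expectation is $9=3\cdot3$: a factor $3$ from the choice of $c_i$ on the fibre, and a factor $3$ from the fibrewise construction (e.g. the cubic relation on the fibre). This I would verify by directly computing the pullback map in coordinates.

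The main obstacle is Step 1: finding a rational trisection over $\Q$ on a surface of Picard rank one. Everything else is explicit algebra that can be checked by computer. To find the trisection, I would first search for $q$ by imposing that the discriminant of the resulting curve vanishes to make it rational, which amounts to a small polynomial system in the coefficients of $q$.
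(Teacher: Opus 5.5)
There is a genuine gap, and it is in Step~2, which is where the rational surface dominating $S_{\Q}$ is supposed to come from. Base-changing along a rational trisection $C\to\Pp^1$ does not produce such a surface. The fibres of $J=\Bl_O S_{\Q}\to\Pp^1$ are irreducible (types $\mathrm{I}_0,\mathrm{I}_1,\mathrm{II}$), so after a degree-$3$ base change the Weierstrass model, now with coefficients of degrees $12$ and $18$, stays minimal. Hence $\mathcal E\times_{\Pp^1}C$ is an elliptic surface with $\chi(\cO)=3$, i.e.\ of Kodaira dimension one, and is not uniruled. Fibrewise group-law operations with the sections available there ($0$ and $\sigma$) only yield further sections, i.e.\ countably many rigid rational curves, never a covering family. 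Note also that $J_\eta(\Q(t))=0$, since $\MW(J_{\overline{\Q}}/\Pp^1_{\overline{\Q}})\simeq K_{S_{\overline{\Q}}}^\perp$ has no Galois invariants when $\rho(S_{\Q})=1$. Nor can any fibrewise construction supply the second parameter: for general $s$ the fibre through $\sigma(s)$ has genus one, so a rational map $C\times\Pp^1\dashrightarrow S_{\Q}$ compatible with the fibrations is constant on $\{s\}\times\Pp^1$. So the sentence ``each choice of $s\in C$ sweeps out a rational curve'' is exactly the missing step, and the count $9=3\cdot 3$ in Step~3 is a guess, not a consequence.

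Step~1 is also misaimed. The curve $x=q(u,v)$ is a member of $|-2K_{S_{\Q}}|$ not passing through $O$, hence a bisection, and the members of $|-2K_{S_{\Q}}|$ through $O$ are just pairs of fibres $q(u,v)=0$. Because $J_\eta(\Q(t))=0$, a trisection over $\Q$ must be cut out fibrewise by a line $y=\alpha x+\beta$. When $\alpha,\beta$ are binary forms of degrees $1$ and $3$, this is a member of $|-3K_{S_{\Q}}|$ of arithmetic genus $4$.

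The missing idea is to take the rational surface from outside $S_{\Q}$. The paper observes that $J$ is the relative Jacobian of the genus-one fibration $Y=\Bl_\Sigma\Pp^2\to\Pp^1$ defined by the cubic pencil $u(yz^2-x^3)+v(xz^2+y^3+y^2z-z^3)$. Fisher's invariants of $f_0+\lambda f_1$ are $c_4=48\lambda^2(\lambda-3)$ and $c_6=-216\lambda(4\lambda^4-23\lambda^3-18\lambda^2+\lambda-4)$, and they reproduce exactly the coefficients of $S_{\Q}$; that is the real reason those coefficients look special. Since $Y$ is rational over $\Q$, the relative Abel--Jacobi map $P\mapsto[3P-H]$ ($H$ the line class) gives a dominant map $\Pp^2\dashrightarrow S_{\Q}$. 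Over $\overline{\Q(\lambda)}$ this map is multiplication by $3$ followed by a translation, so its degree is $3^2=9$. In this picture, the images of lines of $\Pp^2$ form a rationally parametrized covering family of rational trisections of $S_{\Q}$, which is what your Steps~1--2 were after. But they come from the auxiliary surface $Y$, not from group-law operations on $S_{\Q}$.
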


Let $S_{\F_5}$ be the~reduction of the surface $S_{\Q}$  modulo $5$. Then $S_{\F_5}$ is a~smooth del Pezzo surface of~degree~$1$, which is defined over $\F_5$. Moreover, one can show that $\rho(S_{\F_5})=1$. Similarly, we prove

\begin{theorem}
\label{thm:explicit-F5}
There is a~dominant separable rational map $\Pp^2_{\F_5}\dashrightarrow S_{\F_5}$ of degree $9$.
\end{theorem}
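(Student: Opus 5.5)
The plan is to reduce the statement modulo $5$ from the construction over $\Q$ in Theorem~\ref{thm:explicit-Q}. The map $\Pp^2_{\Q}\dashrightarrow S_{\Q}$ is presumably given by explicit weighted homogeneous polynomials
$$
u=U(s,t,w),\quad v=V(s,t,w),\quad x=X(s,t,w),\quad y=Y(s,t,w)
$$
with coefficients in $\Q$. These satisfy the defining equation of $S_{\Q}$ identically in $\Q[s,t,w]$. We clear denominators, checking that only the primes $2$ and $3$ occur, and any other bad primes should be avoidable by rescaling. The identity then holds over $\Z[1/6]$, and its reduction defines a rational map $\Pp^2_{\F_5}\dashrightarrow S_{\F_5}$. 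Before this makes sense we must check that the reduced polynomials do not all vanish identically, and that they do not land in the base locus $u=v=0$. The equation has the coefficient $\frac14$, which is harmless at $5$.

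We expect the construction itself to go as follows, so that the reduction step is transparent. Since $S$ has Picard rank one we cannot use \cite{KollarMella} over the ground field. Instead we pass to an extension over which a conic bundle or a rational curve becomes available. Concretely, the natural route uses a curve $C\subset S$ of low degree, for instance a rational member of $|-3K_S|$ or $|-2K_S|$ with a triple point, defined over $\Bbbk$ and having a $\Bbbk$-point. Then $S_{\Bbbk(C)}$ has Picard rank at least two, or has a point off the bad loci. By \cite{KollarMella} or the conic bundle argument, the family over $C\cong\Pp^1$ gives a rational threefold, or surface, dominating $S$. The degree $9$ suggests the following concrete route: take a $\Bbbk$-rational curve $C\sim -3K_S$, which has arithmetic genus $4$ and is made rational by the singularities; use the elliptic fibration structure; and multiply points by a map of degree $9$, for example $P\mapsto 3P$ on the fibres $y^2=x^3+\dots$ of the elliptic fibration given by the pencil $|-K_S|$. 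This yields a dominant map from a rational surface. Its degree is the degree of $[3]$ on the fibres, namely $9$, provided the section through the rational multisection is handled correctly.

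For $\F_5$ the key points are the following.
\begin{enumerate}
\item The reduced map must be dominant. It suffices to compute that the Jacobian determinant of the map in affine coordinates, composed with a projection to $\mathbb{A}^2$ such as $(v/u,\,x/u^2)$, is a nonzero polynomial over $\F_5$. Equivalently, the image must not be contained in a curve.
\item The map must be separable. Multiplication by $3$ on an elliptic curve over a field of characteristic $5$ is separable, since $3$ is prime to $5$; in general, nonvanishing of the Jacobian determinant gives separability directly.
\item The degree must remain $9$. Since the degree of a dominant generically finite map between normal varieties is upper semicontinuous under good reduction, and the $[3]$-description is valid fibrewise in characteristic $5$, the degree stays $9$ rather than dropping.
\end{enumerate}

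The main obstacle is ensuring that reduction mod $5$ does not degenerate the construction. The auxiliary rational curve might become reducible or pass through the base point, or the multisection might collide with the $3$-torsion locus. We would first try verifying everything by explicit computer algebra: reduce the formulas and check that the Jacobian is nonzero and the field degree is $9$ over $\F_5$.
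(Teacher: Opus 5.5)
Your proposal never actually produces a map, and the mechanism you guess for the $\Q$-map would not work. Theorem~\ref{thm:explicit-Q} is a pure existence statement. So ``reduce its formulas modulo $5$'' has no content until you know which formulas they are and why they have good reduction at $5$. Every check you list (the reduced polynomials not vanishing, avoiding $u=v=0$, dominance, the degree not dropping) is deferred to a computation you cannot carry out. Specialization can only make the degree drop, so at best you get degree at most $9$. The lower bound rests on a ``$[3]$-description in characteristic $5$'' that you never establish.

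The construction you sketch also breaks down at three points:
\begin{enumerate}
\item Base change to $\Bbbk(C)\simeq\Bbbk(t)$ does not raise the Picard rank. Since $\Bbbk$ is algebraically closed in $\Bbbk(t)$, the Galois action on $\Pic(S_{\kbar})$ is unchanged.
\item A rational point off the bad loci of a degree-$1$ del Pezzo surface is not known to give unirationality. That is exactly the open case: \cite{KollarMella} needs $\rho\ge 2$, and Theorem~\ref{thm:prokhorov-unirationality} needs $d\ge 2$.
\item $[3]$ on the fibres of $J\to\Pp^1$ is a self-map of $J$, which gives nothing unless its source is already rational. Pulling $J$ back along a rational trisection gives, in general, an elliptic surface with $\chi(\cO)=3$, which is not rational.
\end{enumerate}

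The missing idea is that the rational source should be a \emph{different} genus-one fibration with the same Jacobian. Namely, take the blow-up $Y$ of $\Pp^2$ at the nine base points of a pencil of cubics; this can be done directly over $\F_5$. Take $f_0=yz^2-x^3$ and $f_1=xz^2+y^3+y^2z-z^3$. Over $\overline{\F}_5$ the pencil $uf_0+vf_1$ is admissible. Its base locus $\{z=1,\ y=x^3,\ x^9+x^6+x-1=0\}$ is reduced. Its singular members are three cuspidal and six nodal cubics, each with a single singular point, hence integral. This differs from the configuration over $\Q$ (two cuspidal, eight nodal), so it genuinely has to be checked in characteristic $5$. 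Fisher's invariants $c_4,c_6$ of $f_0+\lambda f_1$ then identify the Weierstrass model of $\Jac(Y_\eta)$ with $S_{\F_5}$. The relative Abel--Jacobi map $P\mapsto\bigl[\cO_{Y_\eta}(3P)\otimes\bigl(\cO_{\Pp^2}(1)|_{Y_\eta}\bigr)^{-1}\bigr]$ of Lemma~\ref{lemma:AJ} is defined over $\F_5(\lambda)$ and is geometrically a translate of $[3]$. Hence it is dominant of degree $9$, and separable since $5\nmid 3$. Your separability remark is the right one for this map.
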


Over the~function field $\C(t)$, we set
$$
S_t=\big\{y^2=x^3-3u^2v^2x-u^5v-uv^5-27t^2u^2v^4\big\}\subset\Pp_{\C(t)}(1_u,1_v,2_x,3_y).
$$
Then $S_t$ is a~smooth del Pezzo surface of degree $1$ defined over $\C(t),$ and $\rho(S_t)=1$. We prove

\begin{theorem}
\label{thm:explicit-Ct}
There is a~dominant rational map $\Pp^2_{\C(t)}\dashrightarrow S_t$ of degree $9$.
\end{theorem}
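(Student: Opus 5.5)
We need a plan for proving that $S_t$ admits a dominant rational map of degree $9$ from $\Pp^2_{\C(t)}$. The natural strategy, mirroring the $\Q$ and $\F_5$ cases, is to find a curve $C\subset S_t$, defined over $\C(t)$, with two properties: $C$ is rational (or at least has a dense set of $\C(t)$-points, parametrized over a rational base), and $C$ moves in a family along which we can apply a secant/tangent construction.

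Concretely, I would look for a rational curve $C\in|-2K_S|$ or $C\in|-3K_S|$ with many singular points. The degree $9=3^2$ suggests working with the double cover structure: the linear system $|-2K_S|$ gives $S\to \Pp(1,1,2)$, a quadric cone, ramified along a curve of degree $6$. I would find a $\C(t)$-rational curve $Z$ on the cone whose preimage splits or is birational to a rational curve. Then, following Manin/Koll\'ar, for a general point $P\in C$ I would consider the tangent-type construction along a conic or elliptic fibration.

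The cleanest route is through elliptic curves in $|-K_S|$. Fix the pencil $|-K_S|$ with base point $O$, so that each member $E_{[u:v]}$ is an elliptic curve over $\C(t)(u/v)$ with origin $O$. Given a rational curve $C\subset S_t$ meeting the general fibre in $m$ points, and a point $P\in C$, the map $(P,[u:v])\mapsto nP'$ on the fibre gives a map $C\times\Pp^1\dashrightarrow S_t$. Here $P'$ is the point of $E_{[u:v]}$ obtained from $P$ via the group law, for instance the intersection of $E_{[u:v]}$ with a section constructed from $C$. Multiplication by $3$ on the fibre has degree $9$, which explains the number $9$. So the plan is as follows.

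First, exhibit an explicit section (or multisection) $\sigma\colon\Pp^1\dashrightarrow \mathcal E$ of the elliptic fibration $\Bl_O S_t\to\Pp^1$ after a base change $B\to\Pp^1$ with $B$ rational, such that $\sigma$ is non-torsion. For this surface the term $-27t^2u^2v^4$ is suggestive: I expect that a substitution making $x$ a specific quadratic in $u,v$ turns the right-hand side into a square times a linear factor over a rational base, producing a rational bisection. Better still, I would find a rational curve $C$ whose preimage under $\C(t)$-twisting lets $t$ itself serve as a parameter.

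Second, I would use the surface $X=C\times_{\Pp^1}\mathcal E$, or more concretely the map $\Pp^1_s\times\Pp^1_w\dashrightarrow S_t$, $(s,w)\mapsto 3\cdot(\text{point}(s,w))$ on the fibre. I would check dominance by showing that the points obtained are not all torsion, which follows from the non-degeneracy of the height of the section. I would check the degree by noting that $[3]$ has degree $9$ and the auxiliary parametrization is birational.

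The main obstacle is the first step: actually finding the rational curve or multisection over $\C(t)$ that yields a rational family of points on the fibres. I would first search for conics (curves in $|-2K_S|$ through $O$ with the right singularities) defined over $\C(t)$ by imposing that $x^3-3u^2v^2x-u^5v-uv^5-27t^2u^2v^4-(\text{cubic in }u,v,x)^2$ factor suitably. I would also exploit the special shape of the equation, namely the symmetry in $u^5v+uv^5$ and the fact that the $j$-invariant structure has an extra $3$-torsion-like behaviour, to locate a $3$-isogeny. If the fibration has a $3$-isogeny $\phi$, then $\hat\phi\circ\phi=[3]$, and composing a birational parametrization of the isogenous surface with the degree-$9$ map gives the result.
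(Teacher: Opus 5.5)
There is a genuine gap. Your plan never produces a rational variety dominating $S_t$, and the mechanism you give for the number $9$ cannot supply one.

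You work throughout with the fibration $|-K_{S_t}|$ of $S_t$ itself, with origin $O$. There, multiplication by $3$ is a rational self-map of $S_t$. Composing with it only helps if you already have a dominant map from a rational surface, which is exactly what must be proved. None of the candidate source surfaces you describe works:
\begin{itemize}
\item \emph{A family of sections $\sigma_P$ varying with $P\in C$.} Such a family must be constant, since the Mordell--Weil group is finitely generated, so the image of $C\times\mathbb{P}^1$ would only be a curve.
\item \emph{A non-torsion section over $\C(t)$.} None exists. Since $\rho(S_t)=1$, the Galois-invariant part of $K_{S_t}^\perp$ vanishes. This lattice is the Mordell--Weil lattice, which is torsion-free, so the Mordell--Weil group over $\C(t)(\lambda)$ is trivial.
\item \emph{The base change $X=C\times_{\mathbb{P}^1}\mathcal{E}$ along a rational multisection of degree $m\ge 2$.} This surface is not parametrized by $\mathbb{P}^1_s\times\mathbb{P}^1_w$, because its fibres over $C$ have genus one. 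The discriminant of $S_t$ is $27\lambda^2(\lambda^4+27t^2\lambda^3-2\lambda^2+1)(\lambda^4+27t^2\lambda^3+2\lambda^2+1)$, so the fibration has eight $I_1$ fibres. Hence $e(X)\ge 8m>12$, so $X$ is K3 or properly elliptic and cannot be $\C(t)$-unirational.
\item \emph{The $3$-isogeny idea.} This is a guess with no supporting computation. It would also only help if the isogenous surface were $\C(t)$-rational.
\end{itemize}

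The missing idea is to realize $S_t$ as the Jacobian of a \emph{different} genus-one fibration, one whose total space is rational and which has no section over $\C(t)$. The paper proceeds as follows.
\begin{itemize}
\item It takes the cubic pencil $u(yz^2-x^3)+v(y^3-xz^2-2tz^3)$ on $\mathbb{P}^2_{\C(t)}$.
\item It checks the pencil is admissible: the base locus is reduced (cut out by $x^9-x-2t=0$), and all members are geometrically irreducible.
\item It blows up the nine base points to get $\theta\colon Y\to\mathbb{P}^1$.
\item Using Fisher's invariants $c_4=144\lambda^2$ and $c_6=864\lambda(\lambda^4+27t^2\lambda^3+1)$, it shows the relative Jacobian is $S_t$ blown up at the base point of $|-K_{S_t}|$. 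Smoothness of the Weierstrass model comes from irreducibility of the fibres.
\item The relative Abel--Jacobi map $P\mapsto[\cO(3P)\otimes\cO_{\mathbb{P}^2}(-1)|_{Y_\eta}]$ then gives the dominant map $\mathbb{P}^2\dashrightarrow S_t$ of degree $9$ directly.
\end{itemize}

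Your instinct that $9=3^2$ comes from tripling is right in spirit: over $\overline{K}$ this map is $[3]$ after identifying $Y_{\bar\eta}$ with its Jacobian. But the tripling must be done on the rational torsor $Y$, not on $S_t$.
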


Over $\mathbb R$, we prove a~stronger result:

\begin{theorem}\label{thm:real-unirationality}
Every smooth real del Pezzo surface of degree $1$ is unirational.  More precisely, it admits a~dominant rational map
from $\Pp^2_{\mathbb R}$ of degree at most $96$.
\end{theorem}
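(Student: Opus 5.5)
Let $S$ be a smooth real del Pezzo surface of degree $1$. By \cite[Corollary~8]{KollarMella} we may assume $\rho(S)=1$. Otherwise $S$ is already unirational. For the degree bound in that case I would either track the degree in the Koll\'ar--Mella construction or reduce to the same argument given below.

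The idea is to find a real quadratic point on $S$, that is, a conjugate pair of points $P,\bar P\in S(\C)$, in good position. Blowing up this pair, or using the standard quadratic-point construction, should produce a conic bundle or a del Pezzo surface of higher degree that dominates $S$. The key feature over $\mathbb R$ is that the anticanonical double cover $\varphi_{|-2K_S|}\colon S\to \Pp(1,1,2)$ is branched along a sextic curve $B$, and the real locus $S(\mathbb R)$ is always nonempty, since the base point $O$ of $|-K_S|$ is real.

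The concrete plan has three steps.

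\textbf{Step 1.} Consider the elliptic fibration $\widetilde S=\Bl_O S\to\Pp^1_{\mathbb R}$ given by $|-K_S|$. Every real fiber $C_t$ is a real plane cubic with a real point $O$, so $C_t(\mathbb R)$ is a union of one or two circles. Take the multisection of points of order $2$ (degree $3$) or order $3$ (degree $8$) in the fibres relative to $O$. Over $\mathbb R$, each real fibre has a real $2$-torsion point. Hence the $2$-torsion trisection $T\subset\widetilde S$ has a real component, or $T$ is irreducible with real points over an interval of $\Pp^1(\mathbb R)$.

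\textbf{Step 2.} Use these real points to run the degree-$2$ argument: choose a real point $Q\in S(\mathbb R)$ that lies off $B$ and is not a generalized Eckardt point. Such a point exists because $S(\mathbb R)$ has nonempty interior and both bad loci are proper closed subsets. Then consider the curve $C_Q\in|-2K_S|$ singular at $Q$, or equivalently the pencil $|-K_S|$ through $Q$ together with the tangent construction. Base-change $S$ over the fiber $C_{Q}\ni Q$. I would follow Manin's chord--tangent approach: the family of curves in $|-2K_S|$ with a node at $Q$ is a $\Pp^1$-family of genus-$\le 1$ curves. Each such curve carries the rational point $Q$, so its normalization is a rational curve or an elliptic curve with a rational point. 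Taking the elliptic surface $\mathcal E\to\Pp^1$ it forms, I would look for a rational section after a base change $\Pp^1\to\Pp^1$ obtained from a real conic. This gives a rational surface $Y$ mapping dominantly to $S$. Then $Y$ is rational because it is a conic bundle over $\Pp^1_{\mathbb R}$ with a section, and its real locus is nonempty. Here I would invoke Comessatti's theorem: a real geometrically rational surface with $Y(\mathbb R)$ nonempty and connected, plus a section, is rational.

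\textbf{Step 3.} Count degrees. The steps contribute factors of $2$ (double cover structure), $3$ (trisection), $2$ (conic base change), and at most $8$ (choice among $3$-torsion or tangent points). This gives $2\cdot 3\cdot 2\cdot 8=96$.

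The main obstacle is Step 2. One must guarantee that the auxiliary surface $Y$ is $\mathbb R$-rational, not merely that $Y(\mathbb R)\ne\varnothing$. Real surfaces with disconnected real locus can fail to be rational. My first attempt would be to choose $Q$ and the base change so that $Y$ is a minimal conic bundle with at most $2$ degenerate fibres, or a del Pezzo surface of degree $\ge5$ with a real point. Both of these are rational by the results quoted in the introduction. Failing that, I would use a second auxiliary point to connect the components.
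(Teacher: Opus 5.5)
Your proposal has a genuine gap. Step 2 is meant to produce the dominating rational surface, but it does not work, and nothing else in the plan constructs a map to $S$.

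The ``$\Pp^1$-family of curves in $|-2K_S|$ with a node at $Q$'' does not exist. The system $|-2K_S|\simeq\Pp^3$ is the pull-back of the hyperplane sections of the quadric cone $\Pp(1,1,2)\subset\Pp^3$. Being singular at $Q$ imposes $3$ conditions. For $Q$ off $B$, the only hyperplane section singular at the image of $Q$ is the tangent plane, which cuts the cone in the doubled ruling. So the unique member of $|-2K_S|$ singular at $Q$ is $2C_Q$, where $C_Q\in|-K_S|$ is the fibre through $Q$. Manin's tangent construction in degree $2$ relies on $|-K_X|$ being a net. In degree $1$ the anticanonical system is only a pencil, which is exactly why this case is hard.

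The rest of Step 2 is also not coherent. The auxiliary surface is first an elliptic surface and then a conic bundle with a section. A real node may have conjugate branches, so $Q$ need not give a real point on the normalization. You also admit you cannot guarantee that the auxiliary surface is rational. The $2$-torsion trisection from Step 1 is never used, and the count $2\cdot3\cdot2\cdot8=96$ is not derived from any actual map. Finally, invoking Koll\'ar--Mella for $\rho(S)>1$ gives no degree bound.

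The missing idea is a reduction to degree $2$ via the action of complex conjugation $\sigma$ on $K_{S_{\C}}^\perp\simeq\mathrm{E}_8(-1)\simeq\MW(J_\C/\Pp^1_\C)$, where $J=\Bl_OS$. By Carter's classification of involutions in $W(\mathrm{E}_8)$, $\sigma$ either fixes a root or acts as $-\mathrm{id}$.

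\emph{If $\sigma$ fixes a root}, there is a real $(-1)$-curve. Then $S$ is the blow-up of a real del Pezzo surface of degree $2$ at a real point, and Manin's theorem gives a map from $\Pp^2_{\mathbb R}$ of degree at most $24$. This case also covers $\rho(S)>1$.

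\emph{If $\sigma=-\mathrm{id}$}, take the section $E$ corresponding to a root. Since $\sigma(E)=-E$ in the Mordell--Weil group, $\tau=t_E\circ\sigma$ is a new antiholomorphic involution of $J_\C$. It defines a real genus-one fibration $Y\to\Pp^1_{\mathbb R}$ with Jacobian $J$, in which $C$ and $E$ are a conjugate pair of disjoint $(-1)$-curves. Contracting them gives a real del Pezzo surface $Z$ of degree $2$ with an admissible anticanonical pencil. The relative Abel--Jacobi map for $A=-K_Z$ is then a dominant map $Z\dashrightarrow S$ of degree $(A\cdot(-K_Z))^2=4$.

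To see that $Z(\mathbb R)\ne\varnothing$, note that $\sigma$ has trace $-6$ on $\Pic(J_\C)$. The Lefschetz formula then gives $\chi(J(\mathbb R))=8\ne0$, which forces a real singular fibre. Its unique singular point is fixed by both $\sigma$ and $t_E$, hence by $\tau$, so it is a real point of $Y$ and maps to one of $Z$. Composing with Manin's map of degree at most $24$ for $Z$ gives the bound $4\cdot24=96$.
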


Now, applying \cite[Corollary~4.4]{IskovskikhPencil}, we obtain the~following result.

\begin{corollary}
\label{cor:all-real-rational-surfaces}
Let $X$ be a~smooth geometrically rational real surface. Then $X$ is unirational if and only if   $X(\mathbb R)\neq\varnothing$.
\end{corollary}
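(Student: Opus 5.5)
The plan is to reduce the corollary to the minimal model program over $\mathbb R$, using Iskovskikh's classification of minimal real geometrically rational surfaces, and then to settle each minimal case by a unirationality result either stated earlier in the paper or classical. One direction needs nothing: if $X$ is unirational over $\mathbb R$, composing a dominant map $\Pp^2_{\mathbb R}\dashrightarrow X$ with the Zariski dense set of real points of $\Pp^2$ gives real points of $X$, so $X(\mathbb R)\neq\varnothing$.

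For the converse, suppose $X(\mathbb R)\neq\varnothing$. Run the real MMP to get a birational morphism $X\to Y$ onto a minimal surface $Y$. The contraction of a real $(-1)$-curve, or of a conjugate pair of disjoint $(-1)$-curves, sends real points to real points, so $Y(\mathbb R)\neq\varnothing$. Moreover $X$ is unirational if and only if $Y$ is, since the two are birational. By \cite[Corollary~4.4]{IskovskikhPencil}, combined with Iskovskikh's classification of minimal rational surfaces \cite{IskovskikhMinimal}, I expect the following conclusion: a minimal real geometrically rational surface with a real point is either $\mathbb R$-rational, or birational to a del Pezzo surface of degree $d\le 4$, or a minimal standard conic bundle. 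The role of \cite[Corollary~4.4]{IskovskikhPencil} is to dispose of the conic bundles. Over $\mathbb R$, a standard minimal conic bundle with a real point is either rational or birational to a del Pezzo surface of degree $1$, $2$ or $4$. The reason is that $\mathbb R$ has cohomological dimension $1$, so such conic bundles have few degenerate fibres.

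It remains to treat the del Pezzo surfaces with real points, degree by degree. In degrees $3$ and $4$, unirationality holds by \cite{Segre43,KollarCubic}, as recalled in the introduction. In degree $2$ it follows from Corollary~\ref{corollary:dP2-real}, as in Remark~\ref{remark:dP2-real-intro}. In degree $1$, every smooth real del Pezzo surface is unirational by Theorem~\ref{thm:real-unirationality}; here the real point is automatic, since it is the base point of $|-K_S|$. In degrees $\ge5$ with a real point, the surface is rational.

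The main obstacle is the bookkeeping in the middle step. One has to be sure that the citation really yields exactly this list, with no exotic minimal conic bundle over $\mathbb R$ left outside the del Pezzo cases. One also has to check that passing to the relevant model preserves the existence of real points. Concretely, I would verify two things: that Iskovskikh's result states that a real conic bundle with a real point is birational to $\Pp^2$ or to a del Pezzo surface of degree at most $4$; and that this birational map is defined over $\mathbb R$. If the birational map is over $\mathbb R$, real points transfer, because the real locus of a smooth surface is Zariski dense once it is nonempty.
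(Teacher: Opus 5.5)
\textbf{There is a genuine gap at the conic bundle step.} You claim that a standard minimal real conic bundle with a real point is either rational or birational to a del Pezzo surface of degree $1$, $2$ or $4$. This is false, and so is the heuristic behind it: $\Gal(\mathbb C/\mathbb R)\simeq\Z/2$ has infinite cohomological dimension, and a conic over $\mathbb R(t)$ can degenerate over arbitrarily many real points.

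For a counterexample, let $X\to\Pp^1_{\mathbb R}$ be the smooth model of $x^2+y^2=f(t)z^2$ with $f(t)=\prod_{i=1}^{2r}(t-i)$.
\begin{itemize}
\item Its degenerate fibres are the $2r$ pairs of complex conjugate lines over $t=1,\dots,2r$. So it is relatively minimal, with $K_X^2=8-2r$.
\item $X(\mathbb R)$ is a disjoint union of $r$ spheres, one over each arc of $\Pp^1(\mathbb R)$ on which $f\ge 0$.
\item The number of connected components of the real locus is a birational invariant of smooth projective real surfaces. Blowing up a real point replaces a component $M$ by $M\#\mathbb{RP}^2$, and blowing up a conjugate pair does not touch the real locus.
\item A real del Pezzo surface has at most five real components.
\end{itemize}
Hence for $r\ge 6$ this $X$ is neither rational nor birational to any del Pezzo surface. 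The check you single out as the main obstacle therefore cannot succeed, and your degree-by-degree del Pezzo analysis never reaches such surfaces.

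\textbf{What is needed instead} is a direct proof that a real conic bundle $\psi\colon X\to\Pp^1_{\mathbb R}$ with $X(\mathbb R)\ne\varnothing$ is unirational. Presumably the paper draws this kind of input from \cite[Corollary~4.4]{IskovskikhPencil}: its own deduction is just Theorem~\ref{thm:real-unirationality} combined with that citation, and it involves no reduction of conic bundles to del Pezzo surfaces. The argument is elementary:
\begin{enumerate}
\item Since $X(\mathbb R)$ is a nonempty real surface, $\psi(X(\mathbb R))$ contains a nondegenerate interval $[a,b]$.
\item The generic fibre is split by $\mathbb C(t)$, so it can be written as $x^2+y^2=h(t)z^2$ with $h\in\mathbb R(t)$. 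Then $h\ge 0$ on $[a,b]$ away from finitely many points.
\item Pull back along $g(s)=a+(b-a)s^2/(1+s^2)$, which maps $\Pp^1(\mathbb R)$ into $[a,b]$. Then $h(g(s))$ is nonnegative on $\mathbb R$, hence a sum of two squares in $\mathbb R(s)$.
\item So the pulled-back conic bundle has a section, is rational, and dominates $X$ with degree $2$.
\end{enumerate}
With this replacing your reduction, the rest of your outline goes through: the easy direction, the MMP step, and the del Pezzo cases via \cite{Segre43,KollarCubic}, Corollary~\ref{corollary:dP2-real} and Theorem~\ref{thm:real-unirationality}.
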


Arguing as in the~proof of Theorem~\ref{thm:explicit-Q}, we can produce infinitely many examples of unirational smooth del Pezzo surfaces of degree $1$ defined over $\mathbb{Q}$ that all have Picard rank one. This gives the~following result.

\begin{corollary}
\label{corollary:moduli-intro}
The~coarse moduli space of smooth complex del Pezzo surfaces of degree $1$ contains a~Zariski-dense (countable) subset whose points
parametrize del Pezzo surfaces of degree $1$ defined over $\mathbb{Q}$ that are $\Q$-unirational and have arithmetic Picard rank one.
\end{corollary}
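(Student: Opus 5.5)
The statement to prove is Corollary~\ref{corollary:moduli-intro}. I would build a family version of the construction behind Theorem~\ref{thm:explicit-Q}, take its rational fibres, and show that they are Zariski dense in moduli while keeping Picard rank one.

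\textbf{Step 1: a family of unirational surfaces.} I expect the proof of Theorem~\ref{thm:explicit-Q} to proceed as follows. One picks a suitable curve in $S_{\Q}$ (for instance a member of $|-2K_S|$ or $|-3K_S|$, or a curve of low degree, with a rational point). One then constructs a conic- or elliptic-fibration-type family of curves through it, producing a degree-$9$ dominant map from $\Pp^2$. The coefficients in the defining equation of $S_{\Q}$ are specializations of a family. So I would find a parameter space $B$, a rational variety defined over $\Q$, and a family
\[
\mathcal S\to B,\qquad \mathcal S_b=\{y^2=x^3+A_b(u,v)x+B_b(u,v)\},
\]
in which each smooth fibre admits the same construction. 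That gives a rational dominant map $\Pp^2_{\Q}\dashrightarrow \mathcal S_b$ for every $b\in B(\Q)$ in a dense open subset $U\subset B$. The construction only uses algebraic identities in the coefficients, so it works uniformly over the generic point and specializes to $b\in U(\Q)$.

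\textbf{Step 2: density in moduli.} Let $\mathcal M$ be the coarse moduli space of complex degree-$1$ del Pezzo surfaces; it has dimension $8$. Consider the moduli map $B_{\C}\dashrightarrow \mathcal M$. I would show it is dominant by computing the differential at one point, or by exhibiting that the image has dimension $8$, for example by counting invariants of the binary forms $A_b,B_b$ modulo $\mathrm{GL}_2$ and scalings. Since $B$ is rational, $B(\Q)\cap U$ is Zariski dense in $B$. Hence its image is Zariski dense in $\mathcal M$, and this image is countable.

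\textbf{Step 3: Picard rank one.} It remains to keep $\rho=1$ on a still dense subset. For a fibre over $\Q$, $\rho(\mathcal S_b)=1$ means that the Galois image in $W(E_8)$ acting on $K_S^\perp$ has no invariants. For the fibre $S_{\Q}$ this already holds. I would use Hilbert irreducibility: over the generic point, the monodromy or Galois group $G\subset W(E_8)$ contains the image for $S_{\Q}$, so it has no invariant vectors. Because $B$ is rational, hence Hilbertian, the Galois group of $\mathcal S_b$ equals $G$ for $b$ outside a thin set. The complement of a thin set in $B(\Q)$ is still Zariski dense, so those fibres have $\rho=1$ and form the required subset.

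\textbf{Main obstacle.} The hard step is Step~1 together with the dominance in Step~2. One needs the unirationality construction to involve enough free parameters to cover an $8$-dimensional moduli space. If the construction imposes conditions, such as special curves with rational points, the family may only reach a subvariety. I would first try imposing the condition only as the existence of a rational point or curve of a given type, which costs nothing in moduli after base change. I would then verify dominance by a Jacobian computation at the explicit point $S_{\Q}$.
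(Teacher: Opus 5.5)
There is a genuine gap, and it is the one you flag yourself as the main obstacle: Step~1 never produces a family on which unirationality holds fibrewise. Your guess about how Theorem~\ref{thm:explicit-Q} is proved (a curve with a rational point on $S_{\Q}$, plus a conic or elliptic fibration through it) is not what the paper does. Moreover, no such mechanism is known for degree-$1$ surfaces of Picard rank one; that is exactly the question the paper addresses. So the claim that the construction ``only uses algebraic identities in the coefficients, so it works uniformly'' is an assumption about an unspecified construction. Without it, Step~2 (dominance via a Jacobian computation) and Step~3 (Hilbert irreducibility for the $W(\mathrm{E}_8)$-image) have nothing to act on.

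The missing idea is to parametrize pencils of plane cubics rather than Weierstrass equations.

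\emph{Unirationality.} Take an admissible pencil $\mathcal P\subset|\cO_{\Pp^2}(3)|$ over $\Q$ with base locus $\Sigma$. Blowing up $\Sigma$ gives a genus-one fibration $Y\to\Pp^1$. Contracting the zero section of its relative Jacobian $J$ gives a degree-$1$ del Pezzo surface $S$ over $\Q$. The relative Abel--Jacobi map $P\mapsto\bigl[\cO_{Y_\eta}(3P)\otimes\cO_{\Pp^2}(1)|_{Y_\eta}^{-1}\bigr]$ on the generic member $Y_\eta$ is then a dominant degree-$9$ map $\Pp^2\dashrightarrow S$ defined over $\Q$ (Lemma~\ref{lemma:AJ}). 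This works for every $\Q$-point of the open set $\cU\subset\Gr(2,H^0(\Pp^2,\cO_{\Pp^2}(3)))$ of admissible pencils, and $\cU$ is rational.

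\emph{Dominance.} Dominance of $\widetilde\Phi\colon\cU\to M_1$ needs no computation. Over $\C$, every degree-$1$ del Pezzo surface blown up at its base point is a rational elliptic surface with a section. Hence it is the blow-up of $\Pp^2$ at the nine base points of an admissible cubic pencil, and it is its own Jacobian.

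\emph{Picard rank.} This is controlled by Corollary~\ref{corollary:picard-ranks}: $\rho(S)=r(\Sigma)$, so $\rho(S)=1$ exactly when the nine base points form one Galois orbit. The explicit pencil of Lemma~\ref{lemma:Q-pencil} shows that the degree-$9$ base-point cover over $\cU$ is irreducible. By Hilbert irreducibility, the set $T\subset\cU(\Q)$ of pencils with $\Q$-irreducible base locus is therefore Zariski dense, and so $\widetilde\Phi(T)$ is dense in $M_1$.

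Your Step~3 would be a legitimate, if heavier, substitute for this last point once a suitable family existed. But without the Jacobian-of-a-cubic-pencil construction, the proposal does not prove the corollary.
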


The idea of our unirationality construction (presented in Section~\ref{sec:construction}) is not new --- it has already been used by Dolgachev and Gross in \cite{DolgachevGross}. Let us briefly describe it. Let $Z$ be a~smooth del Pezzo surface defined over~$\Bbbk$ such that $(-K_Z)^2\ge 2$, let $\mathcal{P}\subset|-K_Z|$ be a~general pencil, let $\Sigma$ be its base locus, and let~$A\in\Pic(Z)$ be an ample line bundle. Then we have the~following commutative diagram:
$$
\xymatrix{
&Y\ar@{->}[dl]_{\pi}\ar@{->}[dr]^{\theta}\ar@{-->}[rr]^{\mu_A} && J\ar@{->}[dr]^{\varpi}\ar@{->}[dl]_{\vartheta}&\\
Z\ar@{-->}[rr]_{\phi} &&\mathbb{P}^1_{\Bbbk}&& S\ar@{-->}[ll]^{\varphi}}
$$
where
\begin{itemize}
\item $\pi$ is the~blow up of $\Sigma$,
\item $\theta$ is a~genus-one fibration given by $|-K_Y|$,
\item $\phi$ is the~rational map given by $\mathcal{P}$,
\item $S$ is a~smooth del Pezzo surface of degree $1$,
\item $\varphi$ is the~rational map given by $|-K_S|$,
\item $\varpi$ is the~blow up of the~base locus of $|-K_S|$,
\item $\mu_A$ is the~relative Abel--Jacobi map for $\theta$ induced by $\pi^*(A)$,
\item $\vartheta$ is the~Jacobian fibration of $\theta$.
\end{itemize}
Moreover, $\mu_A$ is dominant and has degree $n^2$ for $n=A\cdot(-K_Z)>0$.
If $\Sigma$ is $\Bbbk$-irreducible, then~$\rho(S)=\rho(Z)$.
To prove Theorems~\ref{thm:explicit-Q}, \ref{thm:explicit-F5}, \ref{thm:explicit-Ct}, we let $Z=\Pp^2$ and $A=\mathcal{O}_{\mathbb{P}^2}(1)$, so $n=3$.
Similarly, to prove Theorem~\ref{thm:real-unirationality}, we let $Z$ be a~real del Pezzo surface of degree $2$ and Picard rank $1$, which proves unirationality of all real del Pezzo surfaces of degree $1$. In Example~\ref{example:DP1-bad}, we show that this approach cannot be used to prove unirationality of all smooth del Pezzo surfaces of degree $1$ defined over $\mathbb{Q}$.

\subsection*{Structure of the~paper}
In Section~\ref{sec:elliptic-prelim}, we present some known results about genus-one fibrations, and their relative Jacobians.
In Section~\ref{sec:construction}, we describe our unirationality construction in detail. In Section~\ref{sec:examples}, we prove Theorems~\ref{thm:explicit-Q}, \ref{thm:explicit-F5}, \ref{thm:explicit-Ct} and Corollary~\ref{corollary:moduli-intro}. In Section~\ref{sec:real-case}, we prove Theorem~\ref{thm:real-unirationality}. 
In Appendix~\ref{sec:prokhorov-construction}, we briefly discuss unirationality for del Pezzo surfaces of degree $\ge 2$.

\subsection*{AI Disclosure}
This project arose from a conversation at the Steklov Mathematical Institute
about mathematical problems that might be approached with the aid of artificial intelligence.
We tested one idea using Chat GPT-5.6~Sol by OpenAI, which led to our examples of unirational smooth del Pezzo surfaces of degree $1$ with Picard rank~one. We~have checked all arguments and computations and take full responsibility for the~contents of the~paper.

\subsection*{Acknowledgments}
We thank Sergey Gorchinskiy and Constantin Shramov for very helpful discussions. Ivan Cheltsov has been supported by Simons Collaboration grant \emph{Moduli of varieties}. The work of K. Loginov, D. Orlov and Yu. Prokhorov was performed at the Steklov International Mathematical Center and supported by the Ministry of Science and Higher Education of the Russian Federation (agreement no. 075-15-2025-303).

\section{Genus-one fibrations}
\label{sec:elliptic-prelim}

As in Section~\ref{sec:intro}, we let $\Bbbk$ be a~perfect field of characteristic different from $2$ and $3$. Recall that a~\emph{genus-one fibration} over $\Pp^1$ is a~proper flat morphism $\theta\colon Y\to\Pp^1$ such that
\begin{itemize}
\item $Y$ is a~smooth projective surface defined over $\Bbbk$,
\item $\theta$ has geometrically connected fibers,
\item the~generic fiber of $\theta$ is a~smooth geometrically irreducible curve of genus $1$.
\end{itemize}
An \emph{elliptic fibration} is a~genus-one fibration together with a~section.

Let $\theta\colon Y\to\Pp^1$ be a~relatively minimal genus-one fibration without multiple fibers, let $K=\Bbbk(\Pp^1)$, and let $Y_\eta$ be its generic fiber. Then the~relative Jacobian of the~fibration $\theta$ is the~relatively minimal elliptic fibration
$$
\vartheta\colon J\longrightarrow\Pp^1
$$
such that $J_\eta=\mathrm{Jac}(Y_\eta)$, where $J_\eta$ is the~generic fiber of the~fibration $\vartheta$.
Note that $J_\eta$ is an elliptic curve over $K$, so, in particular, it has a~$K$-point. Hence,  $\vartheta$ admits a~section, which we will denote by $C$.

\begin{example}
\label{example:Fisher}
Suppose that $Y_\eta$ is a~plane cubic curve. Let $g\in K[x,y,z]$ be a cubic form such that 
$$
Y_\eta=\{g(x,y,z)=0\}\subset\mathbb{P}^2_{K}.
$$
Using \cite{FisherInv}, we can find $J_\eta$ as follows.
Let $c_4(g)$ and $c_6(g)$ be Fisher's invariants of degrees $4$ and $6$, respectively, with the~normalization as in \cite[Section~7.2]{FisherInv}.
Then, by \cite[Proposition~2.3]{FisherInv}, we get
\begin{equation}
\label{eq:fisher-jacobian}
J_\eta=\big\{zy^2=x^3-27c_4(g)xz^2-54c_6(g)z^3\big\}\subset\mathbb{P}^2_{K}.
\end{equation}
Recall from \cite[Theorem~8.5]{FisherHess} that $c_4(g)$ and $c_6(g)$ are determined by the~identity
\begin{equation}
\label{eq:fisher-identity}
H\big(g+\mu H(g))=3\bigl(c_4(g)\mu+2c_6(g)\mu^2+c_4^2(g)\mu^3\bigr)g+\bigl(1-3c_4(g)\mu^2-2c_6(g)\mu^3\bigr)H(g),
\end{equation}
where $H(g)=-\frac{1}{2}\det(\mathrm{Hess}(g))$. 
\end{example}

Put $\cL=(R^1\vartheta_*\cO_J)^\vee$. Then it follows from \cite{Miranda} that the~minimal Weierstrass model of $\vartheta$ is
$$
\big\{y^2z=x^3+a_4xz^2+a_6z^3\big\}\subset \Pp=\Pp_{\Pp^1}(\cO_{\Pp^1}\oplus\cL^2\oplus\cL^3)
$$
for some sections $a_4\in H^0(\Pp^1,\cL^4)$ and $a_6\in H^0(\Pp^1,\cL^6)$, where $z\in H^0(\cO_{\Pp}(1))$, $x\in H^0(\cO_{\Pp}(1)\otimes p^*(\cL^2))$,
$y\in H^0(\cO_{\Pp}(1)\otimes p^*(\cL^3))$,
where $p\colon\Pp_{\Pp^1}(\cO_{\Pp^1}\oplus\cL^2\oplus\cL^3)\to\Pp^1$ is the~natural projection.

\begin{lemma}
\label{lemma:general-weierstrass}
Suppose that $J$ is a~geometrically rational surface.
Then $\deg(\cL)=1$. Moreover, the~following conditions are equivalent:
\begin{enumerate}[label=\rm(\roman*)]
\item every geometric fiber of $\vartheta$ is irreducible,
\item the~minimal Weierstrass model of $\vartheta$ is smooth.
\end{enumerate}
\end{lemma}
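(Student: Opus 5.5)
We first show that $\deg(\cL)=1$, and then prove the equivalence of (i) and (ii) using Tate's algorithm.

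\emph{Step 1: $\deg(\cL)=1$.} For a relatively minimal elliptic fibration $\vartheta\colon J\to\Pp^1$ with a section, the canonical bundle formula (no multiple fibers) gives
$$
K_J=\vartheta^*\bigl(K_{\Pp^1}\otimes\cL\bigr)=\vartheta^*\cO_{\Pp^1}(\deg\cL-2),
\qquad
\chi(\cO_J)=\deg\cL .
$$
All of this is compatible with base change to $\kbar$, so we may work geometrically. Since $J_{\kbar}$ is rational, $\chi(\cO_J)=1$, and therefore $\deg\cL=1$. Alternatively, $\deg\cL\ge 0$ always holds, and $\deg\cL=0$ would make the fibration isotrivial and trivial up to base change, so $J$ would not be rational. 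If $\deg\cL\ge 2$, then $K_J$ is effective, contradicting rationality. So $\deg\cL=1$.

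\emph{Step 2: reduction to local Kodaira types.} With $\deg\cL=1$, we have $a_4\in H^0(\cO(4))$ and $a_6\in H^0(\cO(6))$. The Weierstrass model $W\subset\Pp$ is the contraction of $J$ that collapses all fiber components not meeting the zero section $C$. Since $J$ is smooth, $W$ has at worst rational double points. The singular points of $W$ lie exactly over the fibers of $J$ that contain components disjoint from $C$. Both conditions (i) and (ii) can be checked after passing to $\kbar$, so we may assume $\Bbbk=\kbar$.

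\emph{Step 3: (i) $\Rightarrow$ (ii).} If every fiber is irreducible, then no curves are contracted, so $W\cong J$, which is smooth.

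\emph{Step 4: (ii) $\Rightarrow$ (i).} Suppose some fiber $F$ is reducible. Then it has a component $\Theta$ with $\Theta\cdot C=0$, because $C$ meets $F$ in a single reduced component. Consequently $\Theta$ is a $(-2)$-curve, and it is contracted in $W$, producing a singular point. So $W$ is not smooth.

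The remaining point is to check that this singular point really lies on the global hypersurface $\{y^2z=x^3+a_4xz^2+a_6z^3\}$. This is just the local statement: the minimal Weierstrass model is the normal surface obtained from $J$ by contracting the components not meeting $C$. This is standard (see \cite{Miranda}). To make it concrete, one can use the local Weierstrass equation at $t=t_0$. The model is singular at a point of the fiber exactly when $t-t_0$ divides both $a_4$ and $a_6$ in a way that forces the partial derivatives to vanish; equivalently, $\mathrm{ord}(\Delta)\ge 2$ with additive reduction, or multiplicative reduction with $\mathrm{ord}(\Delta)\ge2$. By Tate's algorithm, these cases correspond exactly to the reducible Kodaira types.

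\emph{Expected main obstacle.} The only delicate point is the identification in Step 4 between contracted components and singular points of the explicit Weierstrass equation. I would handle it by citing Tate's algorithm: the fiber types $I_0$, $I_1$ and $II$ are exactly the cases in which the Weierstrass surface is smooth at every point of the fiber.
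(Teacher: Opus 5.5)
Your argument is correct and is the paper's proof: the canonical bundle formula gives $\deg\cL=\chi(\cO_J)=1$, and the equivalence follows because $J$ is the minimal resolution of the Weierstrass model, with $J\to W$ contracting exactly the fiber components disjoint from $C$. One slip in your optional Tate aside: a type $II$ fiber has additive reduction with $\mathrm{ord}(\Delta)=2$, yet the Weierstrass surface is smooth there, so the additive criterion for singularity should read $\mathrm{ord}(\Delta)\ge 3$, which is what your own final remark that $I_0$, $I_1$, $II$ are the smooth cases says.
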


\begin{proof}
Since $\vartheta$ has no multiple fibers, the~canonical bundle
formula gives
$$
K_J\sim\vartheta^*(K_{\Pp^1}\otimes\cL),
$$
and $\deg(\cL)=\chi(\cO_J)=1$. the~minimal resolution of the~Weierstrass model
contracts exactly the~components of fibers of $\vartheta$ that are disjoint from $C$, which easily implies the~remaining assertions.
\end{proof}

Now, we assume that $J$ is geometrically rational. Then $\deg(\cL)=1$. Let $F$ be a~fiber of~$\vartheta$.  Then 
$$
K_J\sim -F,
$$
so $C$ is a~$(-1)$-curve by the~adjunction formula. Hence, there is a~birational morphism $\varpi\colon J\to S$ such that $S$ is a~smooth weak del Pezzo surface of degree $1$, and $\varpi(C)$ is the~base locus of the~pencil $|-K_S|$. Moreover, the~following conditions are equivalent:
\begin{enumerate}[label=\rm(\roman*)]
\item every geometric fiber of $\vartheta$ is irreducible,
\item $S$ is a~del Pezzo surface of degree $1$, i.e., $-K_S$ is ample.
\end{enumerate}
If $-K_S$ is ample, the~equation of $S$ can be derived from the~Weierstrass equation as follows:
\begin{equation}
\label{eq:weighted-model}
\big\{y^2=x^3+a_4(u,v)x+a_6(u,v)\big\}\subset\Pp_{\Bbbk}(1_u,1_v,2_x,3_y),
\end{equation}
where $u$ and $v$ form a~basis of $H^0(\Pp^1,\cL)$, and $a_4$ and $a_6$ are polynomials of degree $4$ and $6$, respectively.
If~the divisor $-K_S$ is not ample, then \eqref{eq:weighted-model} is the~equation of the~anticanonical model of $S$, which is a~singular del Pezzo surface of degree $1$ that has Du Val singularities.

Now, we let $\MW(J_{\kbar}/\mathbb{P}^1_{\kbar})$ be the~group of sections of $\vartheta_{\kbar}$, and let
$T(J)\subseteq\NS(J_{\kbar})$ be the~sublattice generated by $C$,
the fiber $F$, and the~components of reducible geometric fibers of $\vartheta$ that do
not meet $C$. Then we have the~following exact sequence:
$$
0\longrightarrow T(J)\longrightarrow\NS(J_{\kbar})
 \xrightarrow{\operatorname{res}}
 \MW(J_{\kbar}/\mathbb{P}^1_{\kbar})\longrightarrow0,
$$
where the~last map is the restriction to the~generic fiber, with its degree
removed by a~multiple of $C$.  Explicitly, it is induced by
$$
D\longmapsto\bigl[D\big|_{\mathcal{J}}-(D\cdot F)C\big|_{\mathcal{J}}\bigr],
$$
where $\mathcal{J}$ is the~generic fiber of $\overline{\vartheta}\colon J_{\kbar}\to \mathbb{P}^1_{\kbar}$.
Hence, by \cite[Theorem~1.3]{Shioda} or \cite[Theorem~6.3]{SchuettShioda}, we get
$$
 \rho(J_{\kbar})=2+\operatorname{rk}\MW(J_{\kbar}/\mathbb{P}^1_{\kbar})
 +\sum_{v\in \mathbb{P}^1_{\kbar}}(m_v-1),
$$
where $m_v$ is the~number of irreducible components of the~geometric fiber of $\vartheta$
over $v$.

\begin{lemma}
\label{prop:shioda-tate}
Suppose that every geometric fiber of $\vartheta$ is irreducible.
Then restriction to the~generic fiber induces canonical
$\Gal(\kbar/\Bbbk)$-equivariant isomorphisms
$$
T(J)^\perp\simeq F^\perp/\Z[F]\simeq\MW(J_{\kbar}/\Pp^1_{\kbar}),
$$
where the~orthogonal complements are taken in $\NS(J_{\kbar})$.
In particular, $\MW(J_{\kbar}/\mathbb{P}^1_{\kbar})$ is torsion-free.
\end{lemma}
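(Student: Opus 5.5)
When every geometric fiber is irreducible there are no fiber components disjoint from $C$. So $T(J)$ is just the rank-two lattice $\Z[C]\oplus\Z[F]$, and the proof reduces to the Shioda--Tate exact sequence stated above together with the intersection numbers $C^2=-1$, $C\cdot F=1$, $F^2=0$. The plan is to build both isomorphisms directly from the map $\operatorname{res}$ and check equivariance at the end.

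First, I record that the Gram matrix of $T(J)$ is $\begin{pmatrix}-1&1\\1&0\end{pmatrix}$, which has determinant $-1$, so $T(J)$ is unimodular. A unimodular sublattice of the lattice $\NS(J_{\kbar})$ splits off as an orthogonal direct summand: for any $D$, the class $D-(D\cdot F)C-\bigl(D\cdot C+D\cdot F\bigr)F$ is orthogonal to both $C$ and $F$, as one checks using the Gram matrix. Hence
$$
\NS(J_{\kbar})=T(J)\oplus T(J)^\perp .
$$
Since $\operatorname{res}$ is surjective with kernel $T(J)$, it restricts to an isomorphism $T(J)^\perp\simeq\MW(J_{\kbar}/\Pp^1_{\kbar})$. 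Moreover $T(J)^\perp$ is a subgroup of the free abelian group $\NS(J_{\kbar})$ (free because $J_{\kbar}$ is a rational surface), so $\MW$ is torsion-free.

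For the middle term, I consider the composite $T(J)^\perp\hookrightarrow F^\perp\to F^\perp/\Z[F]$. It is injective because $T(J)^\perp\cap\Z[F]=0$: indeed $aF\cdot C=a$, so $aF\perp C$ forces $a=0$. It is surjective because any $D\in F^\perp$ can be written as $D=aC+bF+D'$ with $D'\in T(J)^\perp$, and then $0=D\cdot F=a$, so $D\equiv D'$ modulo $F$. One also sees directly that $\operatorname{res}$ restricted to $F^\perp$ is $D\mapsto[D|_{\mathcal J}]$ and kills $F$, so $\operatorname{res}$ descends to $F^\perp/\Z[F]$ compatibly with the isomorphisms above.

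It remains to check Galois equivariance. The classes $C$ and $F$ are defined over $\Bbbk$: $C$ is the section, which is defined over $\Bbbk$, and $F$ is the class of a fiber. Therefore $T(J)$, $T(J)^\perp$, $F^\perp$ and $\Z[F]$ are all $\Gal(\kbar/\Bbbk)$-stable. The map $\operatorname{res}$ is Galois-equivariant, since restriction to the generic fiber and subtraction of $(D\cdot F)C$ commute with the action. All the maps above are therefore equivariant.

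The only real subtlety is justifying the exactness of the Shioda--Tate sequence with $T(J)$ as its kernel, and that is taken from the cited results \cite{Shioda,SchuettShioda}. Beyond that, the key point is the unimodularity computation for $T(J)$, which is what makes the complement split off cleanly.
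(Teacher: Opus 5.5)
Your argument is correct and follows the paper's route. The paper's proof simply cites the Shioda--Tate exact sequence and the accompanying lattice lemmas of Shioda and Sch\"utt--Shioda, and you have spelled out exactly that content: $T(J)=\Z[C]\oplus\Z[F]$ is unimodular and so splits off orthogonally, $\operatorname{res}$ identifies $T(J)^\perp$ with the Mordell--Weil group (forcing torsion-freeness), and $T(J)^\perp\to F^\perp/\Z[F]$ is bijective, with your projection $D\mapsto D-(D\cdot F)C-(D\cdot C+D\cdot F)F$ and the Galois-invariance of $C$ and $F$ all checking out.
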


\begin{proof}
Follows from \cite[Theorem~1.3 and Lemmas 4.2–4.3]{Shioda} and \cite[Theorem~6.3]{SchuettShioda}.
\end{proof}

\section{The construction}
\label{sec:construction}

Let $\Bbbk$ be a perfect field of characteristic different from $2$ and $3$. Let $Z$ be a~smooth del Pezzo surface defined over $\Bbbk$ of degree $d=(-K_Z)^2\ge 2$, let $\mathcal{P}\subset|-K_Z|$ be a~pencil, and let $\mathcal{P}_{\kbar}$ be its geometric model defined over $\kbar$.

\begin{definition}
The pencil $\mathcal{P}$ is \emph{admissible} if its base locus is reduced, its generic member is smooth, and every (geometric) member of the~pencil $\mathcal{P}_{\kbar}$ is irreducible.
\end{definition}

\begin{remark}
General pencil in $|-K_Z|$ is admissible.
\end{remark}

Suppose that $\mathcal{P}$ is admissible. Let $\Sigma$ be the~base locus of $\mathcal{P}$ and let $\pi\colon Y\to Z$ be the~blow up of $\Sigma$.  Then we have the~following commutative diagram:
$$
\xymatrix{
&Y\ar@{->}[dl]_{\pi}\ar@{->}[dr]^{\theta}&\\
Z\ar@{-->}[rr]_{\phi} &&\mathbb{P}^1_{\Bbbk}}
$$
where $\theta$ is a~genus-one fibration given by $|-K_Y|$, and $\phi$ is the~rational map given by the~pencil~$\mathcal{P}$. Moreover, since  $\mathcal{P}$ is admissible, every geometric fiber of $\theta$ is irreducible.

Let $\vartheta\colon J\to\Pp^1$ be the~relatively minimal Jacobian of the~genus-one fibration $\theta$, and let $C$ be its zero section. Then $J$ is geometrically rational. 

\begin{lemma}
\label{lemma:weierstrass-criterion}
Every geometric fiber of $\vartheta$ is irreducible.
\end{lemma}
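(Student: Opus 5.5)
We want to show that every geometric fiber of $\vartheta$ is irreducible, given that every geometric fiber of $\theta$ is irreducible. The strategy is to compare singular fibers of a genus-one fibration and of its Jacobian via the local structure at each closed point of $\Pp^1$. We work over $\kbar$ throughout, since irreducibility of geometric fibers can be checked after base change; both $\theta_{\kbar}$ and $\vartheta_{\kbar}$ are relatively minimal.

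First we check that $\theta$ is relatively minimal without multiple fibers, so that the relative Jacobian of Section~\ref{sec:elliptic-prelim} applies. Every fiber of $\theta$ is an anticanonical curve $-K_Y\sim\pi^*(-K_Z)-E$, the strict transform of a member of $\mathcal{P}$. No fiber is multiple: each exceptional curve of $\pi$ over a point of $\Sigma$ is a section of $\theta_{\kbar}$, because $\Sigma$ is reduced. Relative minimality also follows, because fibers are irreducible and $K_Y\cdot F=0$, so no $(-1)$-curve lies in a fiber. In particular $\theta_{\kbar}$ has a section $E_1$. Over $\kbar$, the fibration $\theta_{\kbar}$ is therefore itself an elliptic fibration with zero section $E_1$, relatively minimal, and with generic fiber $Y_\eta\otimes\kbar(\Pp^1)$ possessing a rational point.

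The key step is then the identification $Y_{\kbar}\simeq J_{\kbar}$ over $\Pp^1_{\kbar}$. A genus-one curve with a rational point is isomorphic to its Jacobian. Formation of the Jacobian of the generic fiber commutes with the base change $\Bbbk(t)\to\kbar(t)$, and the relatively minimal regular model over $\Pp^1$ is unique and also commutes with this base change, since $\Bbbk$ is perfect and smoothness is preserved. Hence $J_{\kbar}$ is the relatively minimal model of $\Jac(Y_{\eta,\kbar})\simeq Y_{\eta,\kbar}$, which is $Y_{\kbar}$. The geometric fibers of $\vartheta$ are therefore isomorphic to those of $\theta$, which are irreducible by admissibility of $\mathcal{P}$.

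The main obstacle is justifying that taking the relatively minimal model commutes with base change to $\kbar$, or equivalently that $J_{\kbar}$ is the relatively minimal Jacobian of $\theta_{\kbar}$. If that is delicate, a fallback is to use that fiber types (Kodaira symbols) of a genus-one fibration and of its Jacobian coincide whenever there are no multiple fibers, for example via Liu--Lorenzini--Raynaud, or by a local étale argument at each point of $\Pp^1_{\kbar}$, where a section exists étale-locally. Either route gives irreducibility of every geometric fiber of $\vartheta$. Equivalently, by Lemma~\ref{lemma:general-weierstrass}, the minimal Weierstrass model of $\vartheta$ is smooth.
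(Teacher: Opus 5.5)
Your proof is correct and is the paper's argument: since $\Sigma$ is reduced, the exceptional curves of $\pi_{\kbar}$ are sections of $\theta_{\kbar}$. Hence $Y_{\kbar}\simeq J_{\kbar}$ over $\Pp^1_{\kbar}$ by uniqueness of relatively minimal models, and the geometric fibers of $\vartheta$ are those of $\theta$.

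You are filling in details the paper's one-line proof leaves implicit. The base-change step you flag as the main obstacle is harmless here: $K_J\sim -F$ gives $K_{J_{\kbar}}\cdot\Gamma=0$ for every fiber component $\Gamma$, so no fiber component is a $(-1)$-curve and $J_{\kbar}$ is still relatively minimal.
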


\begin{proof}
Since $\theta$ has sections over $\kbar$, geometric fibers of fibrations $\theta$ and $\vartheta$ are isomorphic over $\kbar$, so the~assertion follows.
\end{proof}

So, by Lemma~\ref{lemma:weierstrass-criterion}, the~minimal Weierstrass model of $J$ is smooth. But, as we explained in Section~\ref{sec:elliptic-prelim}, the~section $C$ is a~$(-1)$-curve, and there exists a~birational morphism $\varpi\colon J\to S$ such that $S$ is a~smooth del Pezzo surface of degree $1$, and $\varpi(C)$ is the~base point of the~pencil $|-K_S|$. Thus, we have the~following commutative diagram:
$$
\xymatrix{
&J\ar@{->}[dr]^{\varpi}\ar@{->}[dl]_{\vartheta}&\\
\mathbb{P}^1_{\Bbbk} &&S\ar@{-->}[ll]^{\varphi}}
$$
where $\varphi$ is the~rational map given by the~pencil $|-K_S|$.

Now, we fix an ample divisor $A\in\Pic(Z)$ and we let $n=A\cdot(-K_Z)>0$.

\begin{lemma}
\label{lemma:AJ}
The divisor $\pi^*(A)$ induces a~dominant rational map $\mu_A\colon Z\dashrightarrow S$ of degree $n^2$ that makes the~following diagram commute:
$$
\xymatrix{
Y\ar@{->}[d]_{\pi}\ar@{->}[drr]^{\theta}\ar@{-->}[rrrr]^{\mu_A} &&&& J\ar@{->}[d]^{\varpi}\ar@{->}[dll]_{\vartheta}\\
Z\ar@{-->}[rr]_{\phi} &&\mathbb{P}^1_{\Bbbk}&& S\ar@{-->}[ll]^{\varphi}}
$$
\end{lemma}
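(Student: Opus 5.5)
We construct $\mu_A$ on the generic fiber and then spread it out. Let $K=\Bbbk(\Pp^1)$ and let $Y_\eta$ be the generic fiber of $\theta$. It is a smooth genus-one curve over $K$, and the restriction $D_\eta=\pi^*(A)|_{Y_\eta}$ is a $K$-rational divisor class. Its degree is
$$
\deg D_\eta=\pi^*(A)\cdot(-K_Y)=A\cdot(-K_Z)=n,
$$
because $-K_Y=\pi^*(-K_Z)-E$, where $E$ is the exceptional divisor, and $\pi^*(A)\cdot E=0$.

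Define a morphism of $K$-varieties $\mu_\eta\colon Y_\eta\to \Jac(Y_\eta)=J_\eta$ by
$$
P\longmapsto [\,nP-D_\eta\,]\in \Pic^0(Y_\eta)=J_\eta .
$$
This map is Galois-equivariant and its source and target classes are $K$-rational, so it is defined over $K$. After base change to $\overline{K}$ we choose a point $O\in Y_\eta(\overline{K})$ and identify $Y_\eta$ with $J_\eta$ via $P\mapsto [P-O]$. Under this identification $\mu_\eta$ becomes $Q\mapsto nQ-c$ for the fixed point $c=[D_\eta-nO]$, that is, multiplication by $n$ followed by a translation. Multiplication by $n$ on an elliptic curve has degree $n^2$. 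It is separable when $\mathrm{char}\,\Bbbk\nmid n$; in general I would only assert that the degree is $n^2$, and the separable case is what the later theorems need. Hence $\mu_\eta$ is finite, surjective and of degree $n^2$.

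Next, $\mu_\eta$ extends uniquely to a rational map $Y\dashrightarrow J$ over $\Pp^1$, since $Y$ and $J$ are the models over $\Pp^1$ of $Y_\eta$ and $J_\eta$, and a morphism of generic fibers spreads out to a rational map of the total spaces. It commutes with $\theta$ and $\vartheta$ by construction. It is dominant, and its degree equals the degree of the induced extension of function fields $K(J_\eta)\subset K(Y_\eta)$, which is $n^2$. Composing with the birational morphism $\pi^{-1}\colon Z\dashrightarrow Y$ and with $\varpi\colon J\to S$ gives a dominant rational map $Z\dashrightarrow S$ of degree $n^2$, which we denote again by $\mu_A$.

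Commutativity of the diagram now follows from three facts: $\theta=\phi\circ\pi$ holds on $Y$; $\vartheta=\varphi\circ\varpi$ holds where $\varphi$ is defined, because $\varpi$ contracts the curves that $\varpi^{-1}$ introduces over the base point and $\vartheta$ is given by $|-K_J|=|\varpi^*(-K_S)-C|$; and $\vartheta\circ\mu_A=\theta$ by construction.

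The main point needing care is that $\mu_\eta$ really is defined over $K$ even though $Y_\eta$ may have no $K$-point. I would justify this by noting that it is the composite of the diagonal-type morphism $Y_\eta\to\Pic^n(Y_\eta)$, $P\mapsto nP$, with the translation $\Pic^n(Y_\eta)\to\Pic^0(Y_\eta)$ by the $K$-rational class $-D_\eta$. Both maps are defined over $K$, and $\Pic^0_{Y_\eta}=J_\eta$ because $J_\eta$ is the Jacobian by definition. A second point to check is that the target is indeed $J$ and not some other model; this is automatic, since any two smooth projective models are birational over $\Pp^1$.
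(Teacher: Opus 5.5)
Your proposal is correct and follows the paper's proof: restrict $\pi^*(A)$ to the generic fiber to get a degree-$n$ divisor class, use the Abel--Jacobi map $P\mapsto[nP-D_\eta]$ (defined over $K$), and spread it out to a dominant rational map $Y\dashrightarrow J$ of degree $n^2$. You also supply details the paper leaves implicit: the degree computation via $-K_Y=\pi^*(-K_Z)-E$, the identification over $\overline{K}$ with multiplication by $n$ followed by a translation, and the descent of the map to $K$.
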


\begin{proof}
Let $Y_{\eta}$ be the~generic fiber of $\theta$.
Then the~restriction of $\pi^*(A)$ to the~curve $Y_{\eta}$ has degree $n$,
and the~Abel--Jacobi morphism $Y_{\eta}\to\Jac(Y_{\eta})$ is given by
$$
P\longmapsto\Big[\cO_{Y_{\eta}}(nP)\otimes \big(\pi^*(A)\big|_{Y_{\eta}}\big)^{-1}\Big].
$$
This map is defined over $\Bbbk(\Pp^1)$.
By construction, it has degree $n^2$ and  is dominant, so it determines the~required rational dominant map $Y\dashrightarrow J$, so the assertion follows.
\end{proof}

We say that the~map $\mu_A$ in Lemma~\ref{lemma:AJ} is the~relative Abel--Jacobi map for $\theta$ induced by $\pi^*(A)$.

\begin{example}
\label{example:maps-and-unirationality}
Let $Z=\Pp^2$  and let $A=\mathcal{O}_{\mathbb{P}^2}(1)$.
Then $\mu_A$ induces a~dominant rational map $\Pp^2\dashrightarrow S$, which has degree $9$.
In particular, $S$ is unirational over $\Bbbk$.
\end{example}

Let $r(\Sigma)$ be the~number of $\Gal(\kbar/\Bbbk)$-orbits in $\Sigma$. Then $\rho(Y)=\rho(Z)+r(\Sigma)$.

\begin{lemma}
\label{lemma:picard-ranks}
Let $F_Y$ and $F_J$ be the~fiber classes of $\theta_{\kbar}$ and
$\vartheta_{\kbar}$, respectively, and put
$$
\Lambda_Y=F_Y^\perp/\Z[F_Y]
\qquad\text{and}\qquad
\Lambda_J=F_J^\perp/\Z[F_J].
$$
Restriction to the~geometric generic fibers induces canonical
$\Gal(\kbar/\Bbbk)$-equivariant isomorphisms
$$
\Lambda_Y\simeq\MW(J_{\kbar}/\Pp^1_{\kbar})\simeq\Lambda_J.
$$
In particular, $\rho(J)=\rho(Y)$.
\end{lemma}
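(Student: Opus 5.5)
The plan is to establish the two isomorphisms separately, each by restricting divisors to the geometric generic fiber. We then read off the Picard rank statement from Galois invariants.

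\emph{The isomorphism $\Lambda_J\simeq\MW(J_{\kbar}/\Pp^1_{\kbar})$.} By Lemma~\ref{lemma:weierstrass-criterion}, every geometric fiber of $\vartheta$ is irreducible. Hence Lemma~\ref{prop:shioda-tate} applies directly and gives $\Lambda_J=F_J^\perp/\Z[F_J]\simeq\MW(J_{\kbar}/\Pp^1_{\kbar})$, Galois-equivariantly.

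\emph{The isomorphism $\Lambda_Y\simeq\MW(J_{\kbar}/\Pp^1_{\kbar})$.} Over $\kbar$ the fibration $\theta_{\kbar}$ has a section, namely any exceptional curve of $\pi$ over a point of $\Sigma$. So $Y_{\kbar}\to\Pp^1_{\kbar}$ is a relatively minimal elliptic fibration whose generic fiber $\mathcal{Y}$ is isomorphic to $\mathcal{J}$ after choosing an origin. To keep things canonical, I will avoid choosing an origin and use the map
$$
D\longmapsto\big[\cO_{\mathcal{Y}}(D|_{\mathcal{Y}})\big]\in\Pic^0(\mathcal{Y})=\Jac(\mathcal{Y})(\kbar(\Pp^1))=\MW(J_{\kbar}/\Pp^1_{\kbar}),
$$
defined for $D\in F_Y^\perp$, since $D\cdot F_Y=0$ means $D|_{\mathcal{Y}}$ has degree $0$. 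This map kills $F_Y$ and is manifestly $\Gal(\kbar/\Bbbk)$-equivariant. For surjectivity, every degree-zero divisor on $\mathcal{Y}$ is the restriction of its closure in $Y_{\kbar}$. For injectivity, suppose $D|_{\mathcal{Y}}$ is principal. Subtracting a divisor of a rational function, $D$ becomes linearly equivalent to a divisor supported on fibers. Since every geometric fiber of $\theta$ is irreducible (admissibility), such a divisor is a rational multiple of $F_Y$, and in fact an integral multiple. Hence $D\in\Z[F_Y]$ in $\NS(Y_{\kbar})$. This is the Shioda--Tate argument of Lemma~\ref{prop:shioda-tate} applied to $Y_{\kbar}$. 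One could also invoke Lemma~\ref{prop:shioda-tate} after noting that $Y_{\kbar}$, with a chosen section, is isomorphic to $J_{\kbar}$ over $\Pp^1_{\kbar}$. However, the direct Picard-group formulation is what makes the isomorphism canonical and equivariant.

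\emph{The Picard ranks.} Over the perfect field $\Bbbk$, for a geometrically rational surface $X$ with $X(\Bbbk)\neq\varnothing$ (both $Y$ and $J$ have $\Bbbk$-points), we have $\rho(X)=\operatorname{rk}\NS(X_{\kbar})^{\Gal}$. Both $F_Y$ and $F_J$ are Galois-invariant. The lattice $\NS$ is unimodular, and $F$ is primitive isotropic with $F\cdot K=0$, so $K\cdot F=0$ and $-K=F$ gives an invariant class with $F^2=0$. Then $\NS\otimes\Q$ splits as $\Q F\oplus\Q F'\oplus(\text{complement})$ Galois-equivariantly after tensoring with $\Q$: take an invariant class $E$ with $E\cdot F=1$, namely $C$ for $J$ and $\pi^*(A)/n$ rationally for $Y$. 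Hence $\rho=2+\operatorname{rk}\Lambda^{\Gal}$ for both $Y$ and $J$, and the isomorphisms above give $\rho(J)=\rho(Y)$.

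The main obstacle I expect is the injectivity step together with the subtlety at the end: producing a Galois-invariant class meeting $F_Y$ positively on $Y$, where there is no $\Bbbk$-section. I would handle the latter rationally using $\pi^*A$ (or $-K_Y$ pulled from $Z$), since ranks only require $\Q$-coefficients.
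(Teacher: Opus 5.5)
Your argument is correct, but it is organized differently from the paper's. The paper's proof works mostly by citation. It notes $J_\eta=\Jac(Y_\eta)$, observes that the exceptional curves of $\pi_{\kbar}$ are sections, and applies Ulmer's Shioda--Tate results to $Y$ and to $J$ separately. It then reads off $\rho(Y)=2+\operatorname{rk}\MW(J_\eta)=\rho(J)$ directly from the rank formula; there are no fiber-component terms because all geometric fibers are irreducible.

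You instead proceed as follows:
\begin{enumerate}
\item you build the map $F_Y^\perp\to\Pic^0(\mathcal Y)=\Jac(\mathcal Y)(\kbar(\Pp^1))$ by hand;
\item you prove it is bijective modulo $\Z[F_Y]$, using closures for surjectivity and verticality plus irreducibility of fibers for injectivity;
\item you use Lemma~\ref{prop:shioda-tate} on the $J$ side;
\item only then do you deduce $\rho(J)=\rho(Y)$, by taking Galois invariants along $\Q F\subset F^\perp\subset\NS\otimes\Q$ with the invariant class $C$, resp.\ $\frac1n\pi^*(A)$.
\end{enumerate}
The paper's route is shorter. Yours actually proves the canonical equivariant identification $\Lambda_Y\simeq\MW(J_{\kbar}/\Pp^1_{\kbar})$, which is what Lemma~\ref{lem:obstruction-decomposition} later needs. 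Your remark about using $\Pic^0$ of the generic fiber, rather than an isomorphism $Y_{\kbar}\simeq J_{\kbar}$ chosen via a section, is exactly what makes the map Galois-equivariant, since translations act trivially on $\Pic^0$.

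Two small repairs are needed.

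First, your parenthetical claim that $Y$ has $\Bbbk$-points is unjustified and false in general. $Z$ need not have $\Bbbk$-points, which is why $Z(\mathbb R)\ne\varnothing$ has to be proved separately in Lemma~\ref{lemma:real-minus-identity}. If $Z(\Bbbk)=\varnothing$, then $Y(\Bbbk)=\varnothing$ as well. You do not need this claim. For any smooth projective geometrically integral $X$, the cokernel of $\Pic(X)\to\Pic(X_{\kbar})^{\Gal(\kbar/\Bbbk)}$ is finitely generated and embeds into the torsion group $\operatorname{Br}(\Bbbk)$. So it is finite, and the ranks agree.

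Second, the step \emph{in fact an integral multiple} uses that the fibers of $\theta_{\kbar}$ are reduced, not merely irreducible. This holds because every fiber meets a section (an exceptional curve of $\pi_{\kbar}$) with intersection number $1$.
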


\begin{proof}
Put $K=\Bbbk(\Pp^1)$, and let $Y_\eta$ and $J_\eta$ be the~generic fibers
of $\theta$ and $\vartheta$, respectively. Then
$$
J_\eta=\Jac(Y_\eta).
$$
This is the~Shioda--Tate isomorphism for a~curve and its Jacobian
\cite[4.1.1]{Ulmer}. After base change to $\kbar$, the
exceptional curves of $Y_{\kbar}\to Z_{\kbar}$ are sections. Applying
\cite[4.2.1]{Ulmer} to $Y$ and $J$, gives
$$
\rho(Y)=2+\operatorname{rk}\MW(J_\eta)=\rho(J).
$$
The claim follows.
\end{proof}

\begin{corollary}
\label{corollary:picard-ranks}
One has $\rho(S)=\rho(Z)+r(\Sigma)-1$.
\end{corollary}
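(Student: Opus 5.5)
We deduce Corollary~\ref{corollary:picard-ranks} by combining Lemma~\ref{lemma:picard-ranks} with two elementary facts about the blow ups $\pi\colon Y\to Z$ and $\varpi\colon J\to S$.

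\emph{Step 1: the rank of $Y$.} The blow up $\pi$ is centred at the finite $\Bbbk$-subscheme $\Sigma$, which is reduced because $\mathcal{P}$ is admissible. Geometrically, $\Pic(Y_{\kbar})=\pi^*\Pic(Z_{\kbar})\oplus\bigoplus_{p\in\Sigma(\kbar)}\Z E_p$. The Galois group preserves $\pi^*\Pic(Z_{\kbar})$ and permutes the $E_p$ exactly as it permutes the points of $\Sigma(\kbar)$. Taking invariants therefore gives $\rho(Y)=\rho(Z)+r(\Sigma)$, as recorded just before Lemma~\ref{lemma:picard-ranks}. Since $Z$ and $Y$ are geometrically rational surfaces with a $\Bbbk$-point (a base point of the anticanonical pencil, or an orbit-free argument), $\rho$ here means the rank of the Galois-invariant part of $\NS$. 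I will use this definition throughout, consistent with the lemma.

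\emph{Step 2: transfer to $J$.} Lemma~\ref{lemma:picard-ranks} gives $\rho(J)=\rho(Y)$. Its content is the Galois-equivariant isomorphism $\Lambda_Y\simeq\Lambda_J$. The fiber class is defined over $\Bbbk$ on both surfaces, and so is a class of fiber degree nonzero: the zero section $C$ on $J$, and $-\pi^*K_Z$-type classes (or $\pi^*A$) on $Y$. Hence both $\NS^{\Gal}$ have rank $2+\operatorname{rk}\Lambda^{\Gal}$. This bookkeeping is the only delicate point, and I would quote the lemma as stated.

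\emph{Step 3: descend from $J$ to $S$.} The morphism $\varpi\colon J\to S$ is the blow up of the single $\Bbbk$-rational base point of $|-K_S|$, with exceptional curve $C$, which is defined over $\Bbbk$. Thus $\Pic(J_{\kbar})=\varpi^*\Pic(S_{\kbar})\oplus\Z C$ Galois-equivariantly, so $\rho(J)=\rho(S)+1$. Combining the three steps yields $\rho(S)=\rho(J)-1=\rho(Z)+r(\Sigma)-1$.

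I expect no real obstacle. The only step that needs care is making sure that the ranks in Lemma~\ref{lemma:picard-ranks} are the arithmetic (Galois-invariant) ranks. This follows because Galois-invariant parts of lattices related by equivariant isomorphisms, and of split sequences $0\to\Z F\to F^\perp\to\Lambda\to 0$ with a $\Bbbk$-rational class of fiber degree nonzero, have additive ranks after tensoring with $\Q$.
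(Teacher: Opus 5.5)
Your proof is correct and is the argument the paper leaves implicit: $\rho(Y)=\rho(Z)+r(\Sigma)$ from blowing up the reduced subscheme $\Sigma$, $\rho(J)=\rho(Y)$ by Lemma~\ref{lemma:picard-ranks}, and $\rho(J)=\rho(S)+1$ because $\varpi$ blows up the $\Bbbk$-rational base point of $|-K_S|$. One aside in your Step~1 is inaccurate but harmless: for $d\ge 2$ the linear system $|-K_Z|$ has no base point, and $\Sigma$ need not contain a $\Bbbk$-point (in the paper's examples it is a single orbit of degree $9$); however, you never need a $\Bbbk$-point, because $\Pic(X)\to\Pic(X_{\kbar})^{\Gal(\kbar/\Bbbk)}$ is always injective and its cokernel is a finitely generated subgroup of the torsion group $\mathrm{Br}(\Bbbk)$, so the two ranks always agree.
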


\begin{corollary}
\label{corollary:corollary-picard-ranks}
If $\rho(Z)=1$ and $\Sigma$ is irreducible over $\Bbbk$, then $\rho(S)=1$.
\end{corollary}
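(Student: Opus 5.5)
This follows directly from Corollary~\ref{corollary:picard-ranks}, so the plan is simply to specialize its formula and check that the hypotheses of the construction are met. We are in the setting of Section~\ref{sec:construction}. There $\mathcal{P}\subset|-K_Z|$ is an admissible pencil with base locus $\Sigma$, and $S$ is the del Pezzo surface of degree $1$ obtained by contracting the zero section of the relative Jacobian $J$ of $\theta\colon Y\to\Pp^1$. Corollary~\ref{corollary:picard-ranks} gives
$$
\rho(S)=\rho(Z)+r(\Sigma)-1.
$$
The hypothesis that $\Sigma$ is irreducible over $\Bbbk$ means that $\Gal(\kbar/\Bbbk)$ acts transitively on the $d$ geometric points of $\Sigma$, which are distinct because $\mathcal{P}$ is admissible. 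Hence $r(\Sigma)=1$. Substituting $\rho(Z)=1$ gives $\rho(S)=1+1-1=1$.

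For completeness, I would also recall where the formula $\rho(S)=\rho(Z)+r(\Sigma)-1$ comes from, since the real content lies there; it is a chain of three equalities.
\begin{enumerate}
\item $\rho(Y)=\rho(Z)+r(\Sigma)$. The exceptional curves of $\pi$ over the points of one Galois orbit sum to a single $\Bbbk$-rational class, and distinct orbits give independent classes.
\item $\rho(J)=\rho(Y)$. This is Lemma~\ref{lemma:picard-ranks}. It identifies $\Lambda_Y$ and $\Lambda_J$ Galois-equivariantly with $\MW(J_{\kbar}/\Pp^1_{\kbar})$, and both Néron--Severi lattices contain the rational classes of a fiber and of a section. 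For $Y$, the $\Bbbk$-rational section class is replaced by $\pi^*(-K_Z)$-type classes, and the fiber is $-K_Y$.
\item $\rho(S)=\rho(J)-1$. The map $\varpi$ contracts the single $\Bbbk$-rational $(-1)$-curve $C$.
\end{enumerate}

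The only step needing care is the second one. One has to pass from the equality of geometric Néron--Severi lattices to equality of Galois invariants, which requires the $\Bbbk$-rational part of $\Lambda$ to lift correctly. For geometrically rational surfaces over a perfect field, $\Pic(Y)\otimes\Q=\NS(Y_{\kbar})^{\Gal}\otimes\Q$. Moreover, both $\langle F_Y, -K_Y\rangle$-type and $\langle F_J, C\rangle$ pieces are rank-$2$ Galois-fixed summands up to finite index. Therefore the ranks of the invariants agree, and the statement follows.
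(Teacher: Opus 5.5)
Your proposal is correct and follows the paper's route: the corollary is just Corollary~\ref{corollary:picard-ranks} with $r(\Sigma)=1$ and $\rho(Z)=1$, and your recap of that formula (via $\rho(Y)=\rho(Z)+r(\Sigma)$, Lemma~\ref{lemma:picard-ranks}, and the contraction of $C$) matches the paper. One small slip in the recap: since $F_Y\sim -K_Y$, the span $\langle F_Y,-K_Y\rangle$ has rank one, so the rank-two Galois-invariant piece should instead be spanned by $F_Y$ and, for example, $\pi^*(-K_Z)$, which satisfies $\pi^*(-K_Z)\cdot F_Y=d\neq 0$.
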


Let us conclude this section by presenting an example of a smooth del Pezzo surface of degree $1$ defined over $\mathbb{Q}$ that has Picard rank one, but it cannot be obtained using our construction. To do this, we set  
\begin{equation}
\label{eq:KSperp}
\begin{array}{lll}
K_{S_{\kbar}}^\perp&=&\{D\in\Pic(S_{\kbar})\mid D\cdot K_{S_{\kbar}}=0\},
\\[1em]
K_{Z_{\kbar}}^\perp&=&\{D\in\Pic(Z_{\kbar})\mid D\cdot K_{Z_{\kbar}}=0\}.
\end{array}
\end{equation}
Let $B_{\Z}\subset\NS(Y_{\kbar})$ be the~sublattice generated by the~exceptional
curves of the~blow up $\pi_{\kbar}\colon Y_{\kbar}\to Z_{\kbar}$, put~$B_{\mathbb{Q}}=B_{\Z}\otimes\Q$, and let
$$
B^0_{\mathbb{Q}}=\{D\in B_{\mathbb{Q}}\mid D\cdot K_{Y_{\kbar}}=0\}.
$$

\begin{lemma}
\label{lem:obstruction-decomposition}
There exists a~canonical $\Gal(\kbar/\Bbbk)$-equivariant isomorphism
$$
K_{S_{\kbar}}^\perp\otimes \mathbb{Q}\simeq (K_{Z_{\kbar}}^\perp\otimes \mathbb{Q})\oplus B^0_{\mathbb{Q}},
$$
where $\mathrm{dim}(K_{Z_{\kbar}}^\perp\otimes \mathbb{Q})=9-d$ and $\mathrm{dim}(B^0_{\mathbb{Q}})=d-1$.
\end{lemma}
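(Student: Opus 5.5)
We will build the isomorphism from three pieces: Lemma~\ref{lemma:picard-ranks}, the blow-up structure of $Y_{\kbar}$, and an orthogonal splitting of $\Lambda_Y\otimes\Q$.

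\emph{Step 1: identify the left side with $\Lambda_J\otimes\Q$.} Since $\varpi\colon J\to S$ is the blow up of the base point of $|-K_S|$ with exceptional curve $C$, we have $\Pic(J_{\kbar})=\varpi^*\Pic(S_{\kbar})\oplus\Z[C]$, and $F_J=-K_J=\varpi^*(-K_S)-C$. The map $D\mapsto\varpi^*D$ sends $K_{S_{\kbar}}^\perp$ into $F_J^\perp\cap C^\perp$, because $\varpi^*D\cdot C=0$ and $\varpi^*D\cdot F_J=D\cdot(-K_S)=0$. Composing with $F_J^\perp\to\Lambda_J$ gives a Galois-equivariant map. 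It is an isomorphism, since $F_J^\perp=(F_J^\perp\cap C^\perp)\oplus\Z F_J$; indeed $C\cdot F_J=1$, so every $x\in F_J^\perp$ decomposes as $x-(x\cdot C)F_J+(x\cdot C)F_J$. This is the lattice $T(J)^\perp$ appearing in Lemma~\ref{prop:shioda-tate}. Lemma~\ref{lemma:picard-ranks} then gives a canonical equivariant isomorphism
$$K_{S_{\kbar}}^\perp\simeq\Lambda_J\simeq\Lambda_Y.$$

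\emph{Step 2: decompose $\Lambda_Y\otimes\Q$.} Over $\kbar$ we have $\Pic(Y_{\kbar})=\pi^*\Pic(Z_{\kbar})\oplus B_{\Z}$. The set $\Sigma$ consists of $d$ reduced points, giving exceptional curves $E_1,\dots,E_d$, and $F_Y=-K_Y=\pi^*(-K_Z)-\sum E_i$. An element $\pi^*D+\sum a_iE_i$ lies in $F_Y^\perp$ exactly when $D\cdot(-K_Z)+\sum a_i=0$.

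We set $W=\pi^*(K_{Z_{\kbar}}^\perp\otimes\Q)\oplus B^0_{\Q}$, which is a Galois-stable subspace of $F_Y^\perp\otimes\Q$; here $B^0_\Q$ is cut out by $\sum a_i=0$. Its dimension is $(\rho(Z_{\kbar})-1)+(d-1)=(10-d-1)+(d-1)=8$. This agrees with $\dim(F_Y^\perp\otimes\Q)-1=(\rho(Y_{\kbar})-1)-1=(10-d+d)-2=8$.

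\emph{Step 3: show $W$ is a complement to $\Q F_Y$.} It suffices to check $F_Y\notin W$. We pair with the Galois-invariant class $E=\sum E_i$. Every element of $W$ pairs to $-\sum a_i=0$ with $E$, while $F_Y\cdot E=d\neq0$. Therefore $W\oplus\Q F_Y=F_Y^\perp\otimes\Q$, and the projection $W\to\Lambda_Y\otimes\Q$ is a Galois-equivariant isomorphism. The summands are canonical because they are defined intrinsically from $\pi$. Combined with Step~1, this gives the stated isomorphism. The dimensions $9-d$ and $d-1$ follow from $\rho(Z_{\kbar})=10-d$ and $|\Sigma|=d$, the latter holding because the base locus is reduced.

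The main obstacle is the canonicity and equivariance of the identification $\Lambda_Y\simeq\Lambda_J$ in Step~1, and we rely entirely on Lemma~\ref{lemma:picard-ranks} for it. Steps~2 and~3 are routine lattice bookkeeping.
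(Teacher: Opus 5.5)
Your proposal is correct and follows essentially the same route as the paper. You identify $K_{S_{\kbar}}^\perp$ with $\Lambda_J\simeq\Lambda_Y$ via $\varpi^*$ and Lemma~\ref{lemma:picard-ranks}, then map $\pi^*(K_{Z_{\kbar}}^\perp\otimes\Q)\oplus B^0_{\Q}$ into $\Lambda_Y\otimes\Q$ and conclude by the dimension count $8=8$. Two of your details go beyond the paper: the pairing with $\sum E_i$ proves the injectivity that the paper simply asserts, and your Step~1 checks directly the identification $T(J)^\perp\simeq F_J^\perp/\Z F_J$, which the paper instead takes from Lemma~\ref{prop:shioda-tate}.
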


\begin{proof}
Pullback by $\varpi$ identifies $K_{S_{\kbar}}^\perp\otimes \mathbb{Q}$ with
$T(J)^\perp\otimes\Q$. Hence, Lemmas~\ref{prop:shioda-tate} and~\ref{lemma:picard-ranks} give canonical
$\Gal(\kbar/\Bbbk)$-equivariant isomorphisms
$$
K_{S_{\kbar}}^\perp\otimes \mathbb{Q}\simeq\Lambda_J\otimes\Q
\simeq\Lambda_Y\otimes\Q.
$$
The~decomposition
$
\NS(Y_{\kbar})=\pi^*(\NS(Z_{\kbar}))\oplus B_{\Z}
$
induces a~natural injective map
$$
\pi^*(K_{Z_{\kbar}}^\perp\otimes \mathbb{Q})\oplus B^0_{\mathbb{Q}}
\longrightarrow\Lambda_Y\otimes\Q.
$$
Both sides have dimension $8$, so this map is an~isomorphism.
\end{proof}

In particular, if $2\le d\le 8$, then the rational Galois representation
$K_{S_{\kbar}}^\perp\otimes \mathbb{Q}$ is reducible. Using this, one can construct a~smooth del Pezzo surface of degree $1$ with a Picard rank equal to one which cannot be obtained via our construction using unirational $Z$. 

\begin{example}
\label{example:DP1-bad}    
Set
$$
X_{\mathbb{Q}}=\big\{y^2+(u+v)xy=x^3+uvx^2+u^3(u+v)x+uv(u^2+uv+v^2)^2\big\}\subset\Pp_{\Q}(1_u,1_v,2_x,3_y),
$$
and let $X_{\mathbb{F}_2}$ be its reduction modulo $2$.
Then $X_{\mathbb{Q}}$ and $X_{\mathbb{F}_2}$ are smooth del Pezzo surfaces of degree $1$ defined over $\mathbb{Q}$ and $\mathbb{F}_2$, respectively.
As above, let $K_{X_{\overline{\F}_2}}^\perp$ be the~orthogonal complement in $\NS(X_{\overline{\F}_2})\otimes\Q$ to $K_{X_{\overline{\F}_2}}$, and let $\Phi(t)$ be its characteristic polynomial of the~geometric Frobenius on this space. Then 
$$
\Phi(t)=t^8-t^6+t^4-t^2+1.
$$
Since $\Phi(1)=1$, Frobenius has no fixed vector on $K_{X_{\overline{\F}_2}}^\perp$, so $\rho(X_{\F_2})=1$, which also implies that $\rho(X_{\Q})=1$. 
Moreover, since $\Phi(t)$ is irreducible, $K_{X_{\overline{\Q}}}^\perp$ is also irreducible. 
In particular, $X_{\Q}$ cannot be obtained via our construction with $Z\ne\mathbb{P}^2_{\Q}$.
Similarly, if $X_{\Q}\simeq S$ with $Z=\mathbb{P}^2_{\Q}$ and $\Bbbk=\Q$, then 
$K_{S_{\kbar}}\otimes \mathbb{Q}^\perp\simeq B^0_{\mathbb{Q}}$,
which gives
$$
\Phi(t)=\frac{t^9-1}{t-1}.
$$
This is a~contradiction.
\end{example}

\section{Examples}
\label{sec:examples}

In this section, we prove Theorems~\ref{thm:explicit-Q}, \ref{thm:explicit-F5}, \ref{thm:explicit-Ct} and Corollary~\ref{corollary:moduli-intro}.
As above, we use the notation and assumptions of Section~\ref{sec:construction}. We set $Z=\mathbb{P}^2_{x,y,z}$ and let $A=\mathcal{O}_{\mathbb{P}^2}(1)$ as in Example~\ref{example:maps-and-unirationality}.

\subsection{Rational numbers and $\mathbb{F}_5$}
\label{sec:arithmetic-model}

Let $\Bbbk=\Q$ or $\Bbbk=\mathbb{F}_5$. Set $f_0=yz^2-x^3$ and $f_1=xz^2+y^3+y^2z-z^3$.

\begin{lemma}
\label{lemma:Q-pencil}
The pencil $uf_0+vf_1=0$ is admissible and its base subscheme is reduced and irreducible over $\Bbbk$, where $[u:v]\in\mathbb{P}^1$.
\end{lemma}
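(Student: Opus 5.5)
Our plan is to verify the three conditions of admissibility, together with irreducibility of the base scheme, by explicit computation with the two cubics $f_0=yz^2-x^3$ and $f_1=xz^2+y^3+y^2z-z^3$.

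\emph{Base locus.} We first compute the base scheme $\Sigma=\{f_0=f_1=0\}$. On the line $z=0$, the equation $f_0=0$ forces $x=0$, and then $f_1=y^3=0$, so $\Sigma$ has no points with $z=0$. In the chart $z=1$ we have $y=x^3$. Substituting this into $f_1$ gives the one-variable polynomial
$$
h(x)=x+x^9+x^6-1 ,
$$
of degree $9$. Hence $\Sigma\simeq\operatorname{Spec}\Bbbk[x]/(h)$, and the Galois orbits of $\Sigma$ correspond to the irreducible factors of $h$. So $\Sigma$ is reduced and irreducible over $\Bbbk$ if and only if $h$ is irreducible over $\Bbbk$.
\begin{itemize}
\item Over $\F_5$ we will check irreducibility of $h$ directly. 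Concretely, we verify that $\gcd\bigl(h,x^{5^k}-x\bigr)=1$ for $k=1,\dots,4$; this rules out every factor of degree at most $4$, which suffices for a polynomial of degree $9$.
\item Since $h$ is monic with integer coefficients, irreducibility mod $5$ implies irreducibility over $\Q$.
\end{itemize}
Irreducibility also gives that $h$ is separable, because the field is perfect. Therefore the nine base points are distinct, which is the reducedness condition in the definition of an admissible pencil.

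\emph{Smoothness of the generic member.} The nine base points are simple intersection points of $f_0$ and $f_1$, since $h$ has simple roots and the intersection is transversal there. So the generic member is smooth at the base points. Away from the base points, Bertini's theorem in characteristic $0$ handles the case $\Bbbk=\Q$. For $\Bbbk=\F_5$ we plan to exhibit one smooth member, for instance $f_0$ or $f_1$ itself, by checking that the discriminant of the ternary cubic is nonzero; openness of smoothness then gives that the generic member is smooth. For $\Q$ it also suffices to observe that the reduction mod $5$ has a smooth member.

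\emph{Irreducibility of every geometric member (main obstacle).} This condition has to hold for all $[u:v]\in\Pp^1(\kbar)$, not just generically. Suppose some member $uf_0+vf_1$ is reducible over $\kbar$. Then it contains a line $L$ over $\kbar$. Since $L\cdot C'=3$ for any other member $C'$, the line $L$ passes through at least three of the base points. The Galois group acts transitively on $\Sigma$; in fact the Galois group of $h$ contains a $9$-cycle, coming from the Frobenius of $\F_5$. The idea is to show that no three points of $\Sigma$ are collinear. To do this we compute the resultant-type polynomial
$$
\det\begin{pmatrix}x_i&x_i^3&1\\ x_j&x_j^3&1\\ x_k&x_k^3&1\end{pmatrix}
=(x_i-x_j)(x_j-x_k)(x_k-x_i)(x_i+x_j+x_k).
$$
So three distinct points are collinear if and only if $x_i+x_j+x_k=0$ for three distinct roots of $h$. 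To exclude this, we form the polynomial whose roots are all sums of three distinct roots of $h$, via symmetric functions or resultants, and check that it does not vanish at $0$. It is most convenient to do this check mod $5$, where it gives the statement both over $\F_5$ and over $\Q$ by specialization.

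We expect this last check, the computation excluding $x_i+x_j+x_k=0$, to be the main computational step. Once it is done, no line passes through three points of $\Sigma$, so no member of the pencil contains a line, and hence every geometric member of the pencil is irreducible.
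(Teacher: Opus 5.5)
Your proposal is correct, but it proves the central point, geometric irreducibility of every member, by a different route from the paper.

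\textbf{The paper's route.} It locates all singular members of the pencil: two cuspidal and eight nodal over $\overline{\Q}$, and three cuspidal and six nodal over $\overline{\F}_5$. It then notes that each has a single node or cusp, so each is integral. This also shows that all remaining members are smooth, which gives generic smoothness at no extra cost.

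\textbf{Your route.} You use the reducedness of $\Sigma$ in an essential way. If a member $D$ contains a line $L$, then $(L\cdot C')_p\le (D\cdot C')_p=1$ at every base point $p$. This inequality is what makes the three base points on $L$ \emph{distinct}, and you should say so explicitly; as written you only state ``at least three''. Since $\Sigma$ lies on the cuspidal cubic $f_0=0$, whose smooth locus is the additive group via the coordinate $x$, collinearity becomes $x_i+x_j+x_k=0$. It then suffices to check that $\prod_{i<j<k}(x_i+x_j+x_k)\in\Z$ is nonzero mod $5$. Equivalently, by Frobenius-equivariance, check that the $28$ elements $\alpha+\alpha^{5^b}+\alpha^{5^c}$, $1\le b<c\le 8$, of $\F_5[x]/(h)$ are nonzero. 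This one check settles $\overline{\F}_5$ and, by specialization, $\overline{\Q}$ at once.

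\textbf{Comparison.} Your approach needs only one transparent computation. It avoids computing the degree-$12$ discriminant of the pencil and analysing individual singularities, but it must treat generic smoothness separately. The paper's approach has to be carried out separately for each field. In return it gives the full configuration of singular fibres, and generic smoothness comes along with it.

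\textbf{A slip to fix.} Neither $f_0$ nor $f_1$ is smooth: $f_0$ has a cusp at $[0:1:0]$ and $f_1$ a cusp at $[1:0:0]$. They are two of the cuspidal members. For generic smoothness over $\F_5$ you must therefore use another member. For example, $f_0+f_1$ is smooth over $\overline{\F}_5$: solving its partial derivatives there leads to a contradiction. Alternatively, check that the discriminant of the pencil is not identically zero.
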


\begin{proof}
The base subscheme is $\{z=1,y=x^3,x^9+x^6+x-1=0\}$, so it is reduced and irreducible, because the polynomial  $x^9+x^6+x-1$ is irreducible over~$\Bbbk$.  

Over $\overline{\Bbbk}$, every curve in the pencil is irreducible. To show this, we can analyze its singular members.
If~$\Bbbk=\mathbb{Q}$, the pencil has two cuspidal and eight nodal curves defined over $\overline{\mathbb{Q}}$.
Similarly, if $\Bbbk=\mathbb{F}_5$, it has three cuspidal and six nodal members defined over $\overline{\mathbb F}_5$. 
Each of these curves has a unique singular point. Hence, every singular member is geometrically integral, while
the remaining members are smooth. 
\end{proof}

Let $\mathcal{P}$ be the~pencil $uf_0+vf_1=0$, where $[u:v]\in\mathbb{P}^1$.
Let us use Example~\ref{example:Fisher} to find $J_\eta$. We have
$$
Y_\eta=\{f_0+\lambda f_1=0\}\subset\mathbb{P}^2_K,
$$
where $K=\Bbbk(\lambda)$. So, we set $g=f_0+\lambda f_1$. Then, in the~notations of Example~\ref{example:Fisher}, we have
$$
H(g)=4\bigl(9x^2y\lambda^2+3x^2z\lambda^2-3xy^2\lambda^2+9xy^2\lambda-27xyz\lambda^2-3xyz\lambda-9xz^2\lambda^2-3xz^2+3yz^2\lambda^3+z^3\lambda^3\bigr).
$$
Hence, \eqref{eq:fisher-identity} gives 
$c_4(g)=48\lambda^2(\lambda-3)$ and $c_6(g)=-216\lambda(4\lambda^4-23\lambda^3-18\lambda^2+\lambda-4)$, so that \eqref{eq:fisher-jacobian} gives
$$
J_\eta=\Big\{zy^2=x^3+\lambda^2(3-\lambda)xz^2+\frac{1}{4}\lambda\bigl(4\lambda^4-23\lambda^3-18\lambda^2+\lambda-4\bigr)z^3\Big\}\subset\mathbb{P}^2_K.
$$

\begin{proof}[Proofs of Theorems~\ref{thm:explicit-Q} and \ref{thm:explicit-F5}]
Note that $J$ is uniquely determined by $J_\eta$. Thus, if $\Bbbk=\mathbb{Q}$, then $S\simeq S_{\Q}$.
Similarly, if $\Bbbk=\mathbb{F}_5$, then $S\simeq S_{\mathbb{F}_5}$.
By Corollary~\ref{corollary:corollary-picard-ranks}, $\rho(S)=1$, so assertions follow from Lemma~\ref{lemma:AJ}.
\end{proof}

\subsection{Function field}

Now, we let $\Bbbk=\C(t)$. Set $f_0=yz^2-x^3$ and $f_1=y^3-xz^2-2tz^3$.

\begin{lemma}
\label{lemma:Ct-pencil}
The pencil $uf_0+vf_1=0$ is admissible, where $[u:v]\in\mathbb{P}^1$.
\end{lemma}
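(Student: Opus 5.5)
To prove that the pencil $uf_0+vf_1=0$ with $f_0=yz^2-x^3$ and $f_1=y^3-xz^2-2tz^3$ is admissible over $\Bbbk=\C(t)$, I would verify the three conditions in the definition one at a time, following the proof of Lemma~\ref{lemma:Q-pencil}.

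\emph{Step 1: the base locus is reduced.} Setting $z=0$ in $f_0$ gives $x^3=0$, hence $x=0$, and then $f_1=y^3=0$. So the base locus lies in the affine chart $z=1$. There $f_0=0$ gives $y=x^3$, and substituting into $f_1$ gives
$$
h(x)=x^9-x-2t.
$$
Since $h'(x)=9x^8-1$, a common root of $h$ and $h'$ satisfies $x^8=1/9$. Then $h(x)=x(x^8-1)-2t=-\tfrac89x-2t$, so $x=-\tfrac94 t$. But then $x^8=\left(\tfrac94\right)^8t^8\neq\tfrac19$ in $\C(t)$, a contradiction. Hence $h$ is separable and the nine base points are distinct. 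Because the curve $y=x^3$ is smooth, the scheme-theoretic intersection is exactly $\{h=0\}$ on it, so it is reduced. As a bonus, $h$ is irreducible over $\C(t)$: it is linear in $t$ and $x^9-x$ is not divisible by $2$. Thus $\Sigma$ is a single Galois orbit, and this is what later gives $\rho(S_t)=1$ through Corollary~\ref{corollary:corollary-picard-ranks}.

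\emph{Step 2: the generic member is smooth.} It suffices to exhibit one smooth member, for example $f_0$ at the point $[u:v]=[1:0]$. The curve $yz^2=x^3$ is a cuspidal cubic, so that member fails; I would instead take $f_1$. Its gradient is
$$
\bigl(-z^2,\;3y^2,\;-2xz-6tz^2\bigr).
$$
This forces $z=0$ and $y=0$, and then $f_1=0$ with $x$ arbitrary is not an issue only if the point lies on the curve. The point $[1:0:0]$ does lie on $f_1=0$, and the third partial vanishes there, so $f_1$ is singular at $[1:0:0]$. I would therefore test a different value of $[u:v]$ instead. The cleaner route is to compute the discriminant of the pencil directly: express the cubic $uf_0+vf_1$ by its Aronhold invariants $S,T$, or use Example~\ref{example:Fisher} with $\lambda=v/u$, and check that $c_4^3-c_6^2$ is a nonzero polynomial in $\lambda$.

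\emph{Step 3: every geometric member is irreducible.} This is the main step. A reducible plane cubic contains a line and is singular along at least two points of that line, or along the whole line in the non-reduced case. So I would determine, for each root $\lambda$ of the discriminant, the singular locus of $g=f_0+\lambda f_1$ over $\overline{\C(t)}$ and show it is a single point. The two special members need separate treatment. At $[1:0]$, $f_0$ is an irreducible cuspidal cubic. At $[0:1]$, $f_1$ is irreducible, since its only singular point is $[1:0:0]$. For the remaining members I would solve $\nabla g=0$. From $\partial_x g=-3x^2-\lambda z^2$ and $\partial_y g=z^2+3\lambda y^2$, the coordinates $x^2$ and $y^2$ are determined up to sign by $z^2$. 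Substituting into $\partial_z g$ then shows the singular point is unique. A shortcut that may suffice is to specialize $t$ to a convenient value $t_0\in\C$ and use that irreducibility of all fibres is an open condition in flat families, provided the discriminant keeps degree $12$. I expect this step, controlling the singular members uniformly in $t$, to be the main obstacle. If needed, I would handle it by an explicit computation of the discriminant in $\lambda$ (expected: a few cuspidal members and the rest nodal) together with the observation that each singular member has a unique singular point, exactly as in Lemma~\ref{lemma:Q-pencil}.
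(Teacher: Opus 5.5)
\textbf{Gap in Step 3.} Your Step 1 is correct and more detailed than the paper. Step 2, which you leave as a computation to be done, follows from your own Step 3 calculation: only finitely many $\lambda$ admit a solution of $\nabla g=0$. It also follows from Bertini together with reducedness of the base scheme. The genuine gap is in Step 3. The criterion you rely on, that a reducible plane cubic is singular at two or more points, is false.

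Two counterexamples: a smooth conic plus a tangent line, such as $y(yz-x^2)=0$, and three concurrent lines, such as $xy(x-y)=0$. Each is a reducible cubic with exactly one singular point, a tacnode and an ordinary triple point respectively. They give fibres of Kodaira types $\mathrm{III}$ and $\mathrm{IV}$, and both occur in pencils with nine distinct base points. So showing that each singular member has a unique singular point does not prove it is irreducible.

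The same objection applies to your argument for $f_1$. It is irreducible for a different reason: it is a cuspidal cubic, or directly, it is linear in $x$ with coprime coefficients $-z^2$ and $y^3-2tz^3$. The proof of Lemma~\ref{lemma:Q-pencil} avoids this problem by identifying the singular members as \emph{nodal} or \emph{cuspidal}. A reducible cubic with only one singular point never has a node or a cusp there.

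\textbf{How to repair it.} Your own computation fixes this with one more line. For $\lambda\ne0,\infty$, a singular point of $g=f_0+\lambda f_1$ satisfies $z=1$, $x^2=-\lambda/3$, $y^2=-1/(3\lambda)$ and $y=\lambda(x+3t)$. This determines $x$ by a linear equation with coefficient $6t\lambda^2\ne0$, so the point is unique, and it has $xy\ne0$. Since $g_{xy}=0$, the Hessian of $g(x,y,1)$ at this point is $\mathrm{diag}(-6x,\,6\lambda y)$, which is nondegenerate. Hence the point is an ordinary node and the member is an irreducible nodal cubic. Together with the cuspidal cubics $f_0$ and $f_1$ at $\lambda=0,\infty$, this shows every geometric member is irreducible.

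Your specialization shortcut is sound in principle. The set of $t$ over which some member contains a line is closed, because the space of pairs (member of the pencil, line) is proper over the $t$-line; the condition on the degree of the discriminant is not needed. It does, however, require an actual good value $t_0$. Note that $t_0=0$ is bad, since there $f_0+f_1=(y-x)(x^2+xy+y^2+z^2)$.
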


\begin{proof}
See the proof of Lemma~\ref{lemma:Q-pencil}.
\end{proof}

Thus, we can let $\mathcal{P}$ be the~pencil $uf_0+vf_1=0$, where $[u:v]\in\mathbb{P}^1$.

\begin{lemma}
\label{lemma:Ct-Sigma}
The locus $\Sigma$ is irreducible.
\end{lemma}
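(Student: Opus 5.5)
The plan is to compute the base locus of $\mathcal{P}$ explicitly, as was done for Lemma~\ref{lemma:Q-pencil}, and then reduce its irreducibility over $\C(t)$ to the irreducibility of a single univariate polynomial over $\C(t)$. For that polynomial, Gauss's lemma does the work.

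First we find $\Sigma=\{f_0=f_1=0\}\subset\Pp^2_{\C(t)}$. On the line $z=0$ the equations become $x^3=0$ and $y^3=0$, so there are no base points there. Hence $\Sigma$ lies in the affine chart $z=1$. In this chart $f_0=0$ gives $y=x^3$, and substituting into $f_1$ gives
$$
\Sigma=\bigl\{z=1,\ y=x^3,\ h(x)=0\bigr\},\qquad h(x)=x^9-x-2t .
$$
So the projection $[x:y:1]\mapsto x$ identifies $\Sigma$ with the subscheme $\{h=0\}\subset\mathbb{A}^1_{\C(t)}$. The length is $9=3\cdot 3$, as Bézout predicts.

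It therefore suffices to show that $h$ is irreducible in $\C(t)[x]$. Consider $h$ as an element of $\C[x][t]$. It has degree $1$ in $t$, with coefficients $-2$ and $x^9-x$. These coefficients are coprime, because $-2$ is a unit, so $h$ is primitive of degree one in $t$. Hence $h$ is irreducible in $\C[x,t]$. Since $h$ has positive degree in $x$, it is also primitive as a polynomial in $x$ over $\C[t]$, because its leading coefficient in $x$ is $1$. Gauss's lemma then gives irreducibility of $h$ in $\C(t)[x]$.

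In characteristic $0$ an irreducible polynomial is separable. So $\Sigma$ is reduced and consists of a single $\Gal(\overline{\C(t)}/\C(t))$-orbit of $9$ geometric points, which means $\Sigma$ is irreducible over $\C(t)$. This reducedness is also the part of admissibility concerning the base locus in Lemma~\ref{lemma:Ct-pencil}.

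I do not expect a real obstacle here. The only point needing care is checking that there are no base points at infinity; after that, the linearity of $h$ in $t$ makes the irreducibility immediate. Combined with Corollary~\ref{corollary:corollary-picard-ranks}, this lemma will give $\rho(S_t)=1$.
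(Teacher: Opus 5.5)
Your proposal is correct and follows the paper's proof: compute $\Sigma=\{z=1,\ y=x^3,\ x^9-x-2t=0\}$ and reduce to the irreducibility of $x^9-x-2t$ over $\C(t)$. You also fill in two details the paper leaves implicit, namely the absence of base points on $z=0$ and the justification of irreducibility via linearity in $t$ plus Gauss's lemma.
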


\begin{proof}
Since $\Sigma=\{z=1,\ y=x^3,\ x^9-x-2t=0\}$ and $x^9-x-2t$ are irreducible, the~assertion follows.
\end{proof}

Now, let us use Example~\ref{example:Fisher} to find the equation of $J_\eta$. We have
$$
Y_\eta=\{f_0+\lambda f_1=0\}\subset\mathbb{P}^2_K,
$$
where $K=\Bbbk(\lambda)$. So, we set $g=f_0+\lambda f_1$. Then, in the~notations of Example~\ref{example:Fisher}, we have
$$
H(g)=-12\bigl(3\lambda^2x^2y-3\lambda xy^2+18t\lambda^2xyz+xz^2-\lambda^3yz^2\bigr).
$$
Hence, \eqref{eq:fisher-identity} gives
$c_4(g)=144\lambda^2$ and $c_6(g)=864\lambda(\lambda^4+27t^2\lambda^3+1)$, so that \eqref{eq:fisher-jacobian} gives
$$
J_\eta=\big \{zy^2=x^3-3\lambda^2xz^2-(\lambda+\lambda^5+27t^2\lambda^4)z^3\big \}\subset\mathbb{P}^2_K.
$$

\begin{proof}[Proof of Theorem~\ref{thm:explicit-Ct}]
Since $J$ is uniquely determined by $J_\eta$, we got $S\simeq S_t$. By Corollary~\ref{corollary:corollary-picard-ranks}, we get~$\rho(S)=1$. Now, the~assertion follows from Lemma~\ref{lemma:AJ}.
\end{proof}

\begin{remark}
\label{remark:monodromy}
Let $\cU\subseteq\Gr\bigl(2,H^0(\Pp^2,\cO_{\Pp^2}(3))\bigr)$ be the~locus parametrizing admissible pencils in $|-K_{\mathbb{P}^2}|$, and let
$\mathcal B\to\cU$ be the~corresponding universal finite \'{e}tale base scheme of degree $9$.
Then it follows from the~proof of Lemma~\ref{lemma:Ct-Sigma} that $\cU\ne\varnothing$. One can show that the~geometric monodromy group of $\mathcal B\to\cU$ is~$\mathfrak{S}_9$.
\end{remark}

\subsection{Moduli}
\label{subsec:moduli}

Let $V=H^0(\Pp^2,\cO_{\Pp^2}(3))\simeq\mathbb{C}^{10}$, and let $\cU\subseteq\Gr(2,V)$ be an~open subset parametrizing admissible pencils in $|-K_{\Pp^2}|$. Then $\operatorname{PGL}_3$ naturally acts $\cU$,
and it follows from \cite[Theorem~7.1(ii)]{MirandaStability} that every pencil in the subset $\cU$ is GIT-stable. 
Blowing up the~nine base points of a~pencil in $\cU$ gives a~rational elliptic surface whose fibers are the~strict transforms of curves in the~pencil. Now, applying the~construction from Section~\ref{sec:construction}
to the~universal plane cubic pencil gives a~$\operatorname{PGL}_3$-invariant dominant morphism $\widetilde\Phi\colon\cU\longrightarrow M_1$, where $M_1$ is the~moduli space of del Pezzo surfaces of degree $1$ \cite{Naruki}. 

Since every point of $\cU$ is GIT-stable, we obtain a quasi-projective geometric quotient $\mathcal{M}=\cU/\!/\operatorname{PGL}_3$,
and $\widetilde\Phi|_{\cU}$ descends to a dominant and generically finite morphism $\Phi\colon\mathcal{M}\to M_1$. Note that $\Phi$ is not birational, since projectively inequivalent pencils of cubics can lead to isomorphic del Pezzo surfaces. 

\begin{proof}[Proof of Corollary~\ref{corollary:moduli-intro}]
Let $T\subset\cU(\Q)$ be the subset parametrizing admissible pencils defined over $\mathbb{Q}$ whose base locus is irreducible over $\mathbb{Q}$. We know from Lemma~\ref{lemma:Q-pencil} that $T\ne\varnothing$. This implies that is $T$ is Zariski dense in $\cU$. The~dominance of $\widetilde{\Phi}$ therefore shows that $\widetilde{\Phi}(T)$ is Zariski dense in $M_1$.
\end{proof}

\section{The real case}
\label{sec:real-case}

Let $S$ be a~smooth real del Pezzo surface of degree $1$, let $O$ be the~base point of the~pencil $|-K_S|$, let~$\varpi\colon J\to S$ be the~blow up of the~point $O$, and let $C\subset J$ be the~exceptional curve. We have a commutative diagram
$$
\xymatrix{
&J\ar@{->}[dr]^{\varpi}\ar@{->}[dl]_{\vartheta}&\\
\mathbb{P}^1_{\mathbb{R}} &&S\ar@{-->}[ll]^{\varphi}}
$$
where $\varphi$ is the~map given by $|-K_S|$, and $\vartheta$ is an elliptic fibration with section $C$. As in \eqref{eq:KSperp}, set 
$$
K_{S_{\mathbb C}}^\perp=\{D\in\Pic(S_{\mathbb C})\mid D\cdot K_{S_{\mathbb C}}=0\}.
$$
Then, as a lattice equipped with the~intersection form, $K_{S_{\mathbb C}}^\perp\simeq\mathrm{E}_8(-1)$ \cite[Chapter~IV, \S25]{ManinCubicForms}.  

Let $F$ be a fiber of $\vartheta$, and let $T=\Z[C]\oplus\Z[F]$.
Then it follows from Lemma~\ref{prop:shioda-tate} that there exists a natural $\Gal(\mathbb{C}/\mathbb{R})$-equivariant isomorphisms
$$
K_{S_{\mathbb C}}^\perp\simeq T^{\perp}\simeq\MW(J_{\mathbb C}/\Pp^1_{\mathbb C}),
$$
where $\MW(J_{\mathbb C}/\Pp^1_{\mathbb C})$ is the~Mordell--Weil lattice defined in Section~\ref{sec:elliptic-prelim},
which is isomorphic to $\mathrm{E}_8$.

\begin{remark}
\label{remark:roots}
Recall that a~root in $K_{S_{\mathbb C}}^\perp$ is a~class $\alpha\in K_{S_{\mathbb C}}^\perp$ such that $\alpha^2=-2$.  
By \cite[Theorem~23.9]{ManinCubicForms}, there exists a $\Gal(\mathbb{C}/\mathbb{R})$-equivariant bijection between the~roots in $K_{S_{\mathbb C}}^\perp$ 
and $(-1)$-curves on $S_{\mathbb{C}}$, which can be described as follows:
$$
\alpha\longmapsto E_\alpha\sim\alpha-K_S.
$$
If $\alpha$ is a root, and $E_\alpha$ is the~corresponding $(-1)$-curve, its strict transform on $J_{\mathbb C}$ is a~section of the~elliptic fibration $\vartheta_{\mathbb C}\colon J_{\mathbb C}\to\Pp^1_{\mathbb C}$, which is disjoint from the~section $C$. 
\end{remark}

Let $\sigma$ be complex conjugation on $S_{\mathbb C}$, and let us use the~same
notation for its lift to $J_{\mathbb C}$ and for the~induced actions on $K_{S_{\mathbb C}}^\perp$ and $\MW(J_{\mathbb C}/\Pp^1_{\mathbb C})$. 
Then it follows from \cite{CarterWeyl} that 
\begin{itemize}
\item either $\sigma$ acts as $-\mathrm{id}$ on $K_{S_{\mathbb C}}^\perp$,
\item or $\sigma$ fixes a root in $K_{S_{\mathbb C}}^\perp$.
\end{itemize}

\begin{corollary}
\label{corollary:real-involution-dichotomy}
Suppose that $\sigma$ does not act as $-\mathrm{id}$ on $K_{S_{\mathbb C}}^\perp$. Then $S$ is unirational.
More precisely, it admits a~dominant rational map from $\Pp^2_{\mathbb R}$ of degree at most $24$.
\end{corollary}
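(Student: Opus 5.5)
The plan is to use the fixed root to produce a real $(-1)$-curve, contract it to reach a real del Pezzo surface of degree $2$ with a real point, and reduce to the degree-$2$ case.

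Since $\sigma$ does not act as $-\mathrm{id}$ on $K_{S_{\mathbb C}}^\perp$, the dichotomy above coming from \cite{CarterWeyl} gives a root $\alpha\in K_{S_{\mathbb C}}^\perp$ with $\sigma(\alpha)=\alpha$. By Remark~\ref{remark:roots}, the bijection $\alpha\mapsto E_\alpha\sim\alpha-K_S$ is $\Gal(\mathbb C/\mathbb R)$-equivariant. Hence the $(-1)$-curve $E_\alpha$ satisfies $\sigma(E_\alpha)=E_\alpha$, so it is defined over $\mathbb R$.

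Contracting $E_\alpha$ over $\mathbb R$ gives a birational morphism $S\to Z$ onto a smooth real surface with $(-K_Z)^2=2$. Here $-K_Z$ is ample, since contracting a $(-1)$-curve on a del Pezzo surface yields a del Pezzo surface. The image $p$ of $E_\alpha$ is a real point, so $Z(\mathbb R)\neq\varnothing$. Because $S$ and $Z$ are birational over $\mathbb R$, any dominant rational map $\Pp^2_{\mathbb R}\dashrightarrow Z$ composed with the inverse of $S\to Z$ is a dominant rational map $\Pp^2_{\mathbb R}\dashrightarrow S$ of the same degree. (Alternatively, $\rho(S)\ge 2$ here, so \cite[Corollary~8]{KollarMella} already gives unirationality, but without the degree bound.)

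Everything is thereby reduced to the degree-$2$ statement: a real del Pezzo surface of degree $2$ with a real point admits a dominant rational map from $\Pp^2_{\mathbb R}$ of degree at most $24$. This is what Corollary~\ref{corollary:dP2-real}, referenced in Remark~\ref{remark:dP2-real-intro}, should provide, and I would invoke it directly.

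The main obstacle is making sure that corollary really yields the bound $24$ for every such $Z$, not just unirationality. If I had to prove it, I would argue as follows.
\begin{enumerate}[label=\rm(\alph*)]
\item Since $Z(\mathbb R)$ is a nonempty real $2$-manifold, I would choose a real point $q\in Z(\mathbb R)$ off the ramification quartic of the anticanonical double cover and not a generalized Eckardt point. Both excluded sets are proper closed subsets, so such a $q$ exists.
\item I would then apply the Manin / Salgado--Testa--V\'arilly-Alvarado construction \cite{SalgadoTestaVarilly}. This passes through the anticanonical curve singular at $q$ and a degree-$2$ base change of the resulting conic-bundle-type family.
\item Finally I would track the degrees of the successive covers to check that the product is at most $24$.
\end{enumerate}
If this bookkeeping gave a larger number, the fallback is to choose $q$ more cleverly, for instance on a real line of $Z$ if one exists, so that one of the covers becomes birational.
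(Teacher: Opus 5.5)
Your reduction is the paper's own argument. A $\sigma$-fixed root gives a real $(-1)$-curve $E_\alpha$. Contracting it exhibits $S$ as the blow-up of a real del Pezzo surface $Z$ of degree $2$ at a real point, so any parametrization of $Z$ transfers to $S$ with the same degree.

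The weak spot is the bound $24$. Corollary~\ref{corollary:dP2-real}, proved via Theorem~\ref{thm:prokhorov-unirationality}, gives unirationality of $Z$ but no control on the degree. Your steps (a)--(c) through \cite{SalgadoTestaVarilly} are left as unfinished bookkeeping. As written, the proposal therefore proves that $S$ is unirational but not the "degree at most $24$" part.

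The paper closes this by citing Remark~\ref{remark:dP2-real}, that is, Manin's construction \cite[Chapter~IV, Theorem~29.4]{ManinCubicForms}. It directly gives a dominant map $\Pp^2_{\mathbb R}\dashrightarrow Z$ of degree at most $24$ for every real degree-$2$ del Pezzo surface with a real point. With that citation in place of your fallback plan, the argument is complete.
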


\begin{proof}
If the~complex conjugation $\sigma$ fixes a root in $K_{S_{\mathbb C}}^\perp$, then $S$ can be obtained by blowing up a real smooth del Pezzo surface of degree $2$ at a real point,  so the assertion follows from Remark~\ref{remark:dP2-real}.
\end{proof}

Now, Theorem~\ref{thm:real-unirationality} is a consequence of the~following result.

\begin{lemma}
\label{lemma:real-minus-identity}
Suppose that $\sigma$ acts as $-\mathrm{id}$ on $K_{S_{\mathbb{C}}}^\perp$. Then there is
a smooth real del Pezzo surface $Z$ of degree $2$ such that
$Z(\mathbb R)\ne\varnothing$ and there exists a~dominant rational map
$Z\dashrightarrow S$ of degree $4$.  
\end{lemma}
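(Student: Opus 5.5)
The plan is to realize $Z$ as a twisted form of $S$ itself, twisted by translation by a suitable Mordell--Weil section. Then $\mu$ is a relative ``multiplication by $2$'' map on the genus-one fibration, which is consistent with the degree $n^2=4$ coming from $A=-K_Z$ and $n=2$ in Lemma~\ref{lemma:AJ}.

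\emph{Choosing the twist.} Fix a root $\alpha\in K_{S_{\mathbb C}}^\perp$. By Remark~\ref{remark:roots}, the strict transform $P$ of $E_\alpha$ on $J_{\mathbb C}$ is a section of $\vartheta_{\mathbb C}$ that is disjoint from $C$. Since $\sigma=-\mathrm{id}$ on $\MW(J_{\mathbb C}/\Pp^1_{\mathbb C})$, we have $\sigma(P)=-P$. Let $t_P$ be fibrewise translation by $P$ on $J_{\mathbb C}$, and set $\sigma'=t_P\circ\sigma$. Then
$$
\sigma'^2=t_P\circ t_{\sigma(P)}=t_{P-P}=\mathrm{id},
$$
so $\sigma'$ is an antiholomorphic involution. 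It defines a real surface $Y$ with a real genus-one fibration $\theta\colon Y\to\Pp^1_{\mathbb R}$. The fibration $\theta$ has the same base and the same Jacobian $J$, because twisting by translations does not change the Jacobian.

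\emph{The degree-$4$ map.} Define $\mu(x)=2x-P$ fibrewise. It is $\sigma'$-to-$\sigma$ equivariant, since
$$
\sigma(2x-P)=2\sigma(x)+P=2\bigl(\sigma(x)+P\bigr)-P .
$$
Hence $\mu$ descends to a dominant real rational map $Y\dashrightarrow J\to S$ of degree $4$.

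\emph{Identifying $Z$.} On $Y_{\mathbb C}=J_{\mathbb C}$ the two sections $C$ and $P$ are disjoint $(-1)$-curves, and $\sigma'$ swaps them: $\sigma'(C)=P$ and $\sigma'(P)=C$. Contracting $C$ yields $S_{\mathbb C}$, and then contracting $E_\alpha$ yields a smooth del Pezzo surface of degree $2$. Both contractions together are $\sigma'$-equivariant, so they produce a real smooth del Pezzo surface $Z$ of degree $2$ together with a birational morphism $\pi\colon Y\to Z$ blowing up a pair of conjugate points. Under $\pi$, the fibration $\theta$ corresponds to a real pencil in $|-K_Z|$ with these two base points. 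The composite $Z\dashrightarrow Y\dashrightarrow S$ is then the required degree-$4$ map. One can double-check that it agrees with the construction of Section~\ref{sec:construction} for $A=-K_Z$.

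\emph{Main obstacle: $Z(\mathbb R)\ne\varnothing$.} The blown-up points are non-real, so it suffices to show $Y(\mathbb R)\ne\varnothing$. For a real value $\lambda$ with smooth fibre $E=J_\lambda$, the twisted fibre has a real point if and only if $P_\lambda=x-\sigma(x)$ for some $x\in E(\mathbb C)$. The image of $x\mapsto x-\sigma(x)$ is the identity component of the anti-invariant subgroup $E^-$, which is the real locus of the quadratic twist. So I must find a real $\lambda$ for which $P_\lambda$ lies on the identity component of $E^-$.

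I would try two ways to secure this. First, using the freedom in $\alpha$ (there are $240$ roots), I would replace $P$ by $P+2Q$ for another anti-invariant section $Q$; this changes the twist only by a coboundary-type modification, since $P+2Q=(P+Q)-\sigma(Q)$. Second, I would look near a real singular fibre, where $E^-$ is connected, or use the known topology of real del Pezzo surfaces of degree $1$ with $\sigma=-\mathrm{id}$, namely $S(\mathbb R)\cong\mathbb{RP}^2\sqcup 4S^2$. This is the step I expect to be the main difficulty.
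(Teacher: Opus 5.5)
Your construction matches the paper's: the twist of $J$ by $\sigma'=t_P\circ\sigma$, the contraction of the swapped pair $C,P$ to a real del Pezzo surface $Z$ of degree $2$, and the degree-$4$ map $x\mapsto 2x-P$. That map is exactly the Abel--Jacobi map of Lemma~\ref{lemma:AJ} for $A=-K_Z$, because $\pi^*(-K_Z)$ restricts to $C+P$ on fibres, and your equivariance check is a useful explicit verification.

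The genuine gap is the remaining part of the statement, $Z(\mathbb R)\ne\varnothing$. You leave it open, and neither of your suggested routes closes it.
\begin{itemize}
\item Replacing $P$ by $P+2Q$ cannot help. Since $\sigma(Q)=-Q$, you have $t_Q\circ(t_P\circ\sigma)\circ t_Q^{-1}=t_{P+2Q}\circ\sigma$, so the new real surface is isomorphic to the old one and has the same real points.
\item The assertion that $E^-$ is connected near a real singular fibre is false in general. $E^-$ is the real locus of the quadratic twist, which has the same discriminant as $E$. On the side of a real nodal fibre where $\Delta>0$, it therefore has two components.
\item You have not shown that a real singular fibre exists at all.
\end{itemize}

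The missing idea is that you need not study smooth fibres.

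First, a real singular fibre exists. $\sigma$ fixes $C$ and $F$ and acts as $-1$ on their $8$-dimensional orthogonal complement, so its trace on $\Pic(J_{\mathbb C})\otimes\Q$ is $-6$. The Lefschetz formula for the antiholomorphic involution then gives $\chi(J(\mathbb R))=2+6=8$. This also follows from the topology $S(\mathbb R)\cong\mathbb{RP}^2\sqcup 4S^2$ that you quote. If all real fibres were smooth, $J(\mathbb R)\to\Pp^1(\mathbb R)$ would be a fibre bundle over a circle with compact $1$-manifold fibres, hence of Euler characteristic $0$.

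Second, all geometric fibres are irreducible, so this real singular fibre has a unique singular point $\mathfrak p$. It is fixed by $\sigma$, because the fibre is real. It is fixed by $t_P$, because $t_P$ preserves every fibre and hence its singular point. Thus $\sigma'(\mathfrak p)=\mathfrak p$, so $\mathfrak p\in Y(\mathbb R)$ and $\pi(\mathfrak p)\in Z(\mathbb R)$, for every choice of the root $\alpha$. This is exactly how the paper finishes the proof.
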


\begin{proof}
Let $\alpha\in K_{S_{\mathbb{C}}}^\perp$ be a root, and let $E$ be the~strict transform on $J_{\mathbb{C}}$ of the~corresponding $(-1)$-curve. Then $E$ is a $(-1)$-curve, and $E$ is a~section of the~elliptic fibration $\vartheta_{\mathbb C}\colon J_{\mathbb C}\to\Pp^1_{\mathbb C}$ such that $E\cap C=\varnothing$. By assumption, we have $\sigma(E)=-E$ in the~Mordell--Weil group. On the~other hand, the~translation by $E$ on the~geometric generic fiber of $\vartheta_{\mathbb C}$ extends to a~regular automorphism $t_E\in\mathrm{Aut}(J_{\mathbb C})$, because the~elliptic fibration $\vartheta_{\mathbb{C}}\colon J_{\mathbb C}\to\mathbb{P}^1_{\mathbb{C}}$ is relatively minimal.
Set $\tau=t_E\circ\sigma$. Then 
$$
\tau^2=t_{E+\sigma(E)}=\mathrm{id}.
$$
Therefore, we can twist the real structure on $J_{\mathbb{C}}$ such that $\tau$ is a new complex conjugation. This gives us a~smooth projective real surface $Y$ and a genus-one fibration $\theta\colon Y\to\Pp^1_{\mathbb R}$ such that $Y_{\mathbb C}\simeq J_{\mathbb C}$, and $\vartheta$ is the~relative Jacobian of $\theta$. Since $\tau(C)=E$, the curves $C$ and $E$ deform a~real conjugate pair of disjoint $(-1)$-curves on $Y_{\mathbb C}$.  Hence, we have the~following commutative diagram:
$$
\xymatrix{
&Y\ar@{->}[dl]_{\pi}\ar@{->}[dr]^{\theta}&\\
Z\ar@{-->}[rr]_{\phi} &&\mathbb{P}^1_{\mathbb{R}}}
$$
where $Z$ is a smooth real del Pezzo surface of degree  $(-K_Z)^2=2$, the~morphism $\pi$ is a birational contraction of the~curves $E$ and $C$ to two complex conjugated points of the~surface $Z$, and $\phi$ is the~rational map given by an admissible pencil $\mathcal{P}\subset |-K_Z|$. 
Hence, applying Lemma~\ref{lemma:AJ} to $Z$ with $A=-K_Z$ gives a~rational dominant map $\mu_A\colon Y\dasharrow J$ of degree $4$,
which gives the~required map $Z\dashrightarrow S$.

However, we did not yet prove that $Z(\mathbb R)\ne\varnothing$. To show this, observe that $\sigma$ acts on $\Pic(J_{\mathbb C})\otimes\Q$ 
with trace $-6$. Hence, the~Lefschetz fixed-point formula gives
$$
 \chi(J(\mathbb R))
 =2+\operatorname{tr}\bigl(\sigma\mid
 H^2(J_{\mathbb C}, \mathbb Q)\bigr)=
 2-\operatorname{tr}\bigl(\sigma\mid
 \Pic(J_{\mathbb C})\otimes\Q\bigr)=
 8.
$$
Thus, the elliptic fibration $\vartheta$ has a~real singular fiber, since otherwise $J(\mathbb R)$ would have Euler characteristic zero.  
Since all geometric fibers of $\vartheta$ are irreducible, this real singular fiber has a unique singular point~$\mathfrak{p}$.
Note that both $\sigma$ and $t_E$ must fix $\mathfrak{p}$. 
Then $\tau(\mathfrak{p})=\mathfrak{p}$, so $\mathfrak{p}\in Y(\mathbb{R})$ and, therefore, $\pi(\mathfrak{p})\in Z(\mathbb R)$.
\end{proof}

\begin{proof}[Proof of Theorem~\ref{thm:real-unirationality}]
The assertion follows from Lemma~\ref{lemma:real-minus-identity}, Corollary~\ref{corollary:real-involution-dichotomy} and Remark~\ref{remark:dP2-real}.
\end{proof}

\appendix

\section{Unirationality of del Pezzo surfaces of degree at least two}
\label{sec:prokhorov-construction}

Let $\Bbbk$ be an infinite perfect field of characteristic different from $2$,
and let $X$ be a~smooth del Pezzo surface of degree $d=(-K_X)^2\ge 2$ that is defined over $\Bbbk$.
The goal of this appendix is to give a~simple birational proof of the~following result, which follows from \cite{SalgadoTestaVarilly,KollarCubic}.

\begin{theorem}
\label{thm:prokhorov-unirationality}
Suppose that there is a~point $P\in X(\Bbbk)$ such that the following holds:
\begin{itemize}
\item
if $d\le 3$, then  $P$  does not lie on any exceptional curve of $X_{\kbar}$,
\item 
if $d=2$, then $P$ does not lie on the~ramification divisor of the~anticanonical double cover $X\to\Pp^2$.
\end{itemize}
Then $X$ is $\Bbbk$-unirational.
\end{theorem}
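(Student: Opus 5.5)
We give a direct birational argument, by induction on $d$ from $d=9$ downwards. The basic move is to blow up the given point, with a degree-dependent modification when $d$ is small.

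\textbf{High degree, $d\ge 5$.} By the classical results recalled in the introduction, $X(\Bbbk)\ne\varnothing$ already gives $\Bbbk$-rationality, hence unirationality.

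\textbf{Degree $d=4$.} If $P$ lies on no $(-1)$-curve, the blow up $\Bl_P X$ is a del Pezzo surface of degree $3$ with a $\Bbbk$-point, and we conclude by the cubic case. If $P$ does lie on a line, I would instead work directly with the double projection from $P$, which gives a conic bundle. We then either use the standard argument that a conic bundle over $\Pp^1$ with a $\Bbbk$-section point is unirational, or reduce to the case where $P$ is general by moving $P$.

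\textbf{Degree $d=3$.} This is Segre's argument. Since $P$ lies on no line, the tangent section $T_PX\cap X$ is a geometrically irreducible cuspidal or nodal cubic curve $C_P$, which is rational over $\Bbbk$. For a general point $Q\in C_P(\Bbbk)$, the tangent section $C_Q$ is again rational over $\Bbbk$. The map $C_P\times C_Q\dashrightarrow X$, sending a pair of points to the third intersection of the line through them with $X$, is dominant. This gives a dominant map $\Pp^2\dashrightarrow X$.

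\textbf{Degree $d=2$.} Here I would mimic the same idea using the double cover $\psi\colon X\to\Pp^2$. Since $P$ is off the ramification quartic, the tangent line to the quartic-type curve, namely the member of $|-K_X|$ singular at $P$, is a geometrically integral curve $C_P$ of arithmetic genus one with a double point at $P$. Since $P$ lies on no exceptional curve, $C_P$ is rational over $\Bbbk$ because its node is a $\Bbbk$-point. Then consider the surface $\widetilde X=\Bl_Q X$ for a general $Q\in C_P(\Bbbk)$; we want a degree $1$ surface. Instead, a cleaner route is to take the fibration over $C_P$ whose fiber over $Q$ is the anticanonical curve singular at $Q$. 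Each such curve is rational over $\Bbbk(C_P)$ for the same reason, so we obtain a dominant map from a rational surface, fibered in rational curves with sections over $C_P$, to $X$.

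\textbf{Main obstacle.} The key step is the $d=2$ case: one must check that for a general $\Bbbk$-point $Q$ of $C_P$ the conditions persist, meaning $Q$ is off exceptional curves and off the ramification curve, and is not a generalized Eckardt point. One must also check that the resulting map is dominant. The first condition follows from the density of $C_P(\Bbbk)\simeq\Pp^1(\Bbbk)$ minus finitely many points, which uses that $\Bbbk$ is infinite. Dominance follows because the family of singular anticanonical curves through points of $C_P$ sweeps out $X$, since a curve cannot be singular at all its points.
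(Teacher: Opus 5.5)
\textbf{The degree-$2$ case fails.} This case is the heart of the theorem, and your argument there rests on a curve that does not exist. Let $\psi\colon X\to\Pp^2$ be the anticanonical double cover. Since $-K_X=\psi^*\cO_{\Pp^2}(1)$ and $h^0(-K_X)=3$, every member of $|-K_X|$ is $\psi^{-1}(L)$ for some line $L$.

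If $P$ is off the ramification curve, then $\psi$ is \'etale at $P$. So a local equation of $\psi^{-1}(L)$ at $P$ is the pull-back of a local equation of $L$ at $\psi(P)$, and it has nonzero differential. Thus \emph{every} anticanonical curve through $P$ is smooth at $P$. Indeed, singular points of members of $|-K_X|$ lie over tangency points of $L$ with the branch quartic, i.e.\ on the ramification curve.

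Consequently a nodal or cuspidal anticanonical curve $C_P$ singular at $P$ exists exactly in the case your hypothesis excludes. For the same reason, your fibration over $C_P$, whose fiber over $Q$ is ``the anticanonical curve singular at $Q$'', has no fiber at a general $Q$. The issues you list as the main obstacle (genericity of $Q$, Eckardt points, dominance) are not where the difficulty lies.

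\textbf{The missing idea.} The correct degree-$2$ analogue of the tangent section is the curve $D\in|-2K_X|$ with a triple point at $P$. It exists because $h^0(-2K_X)=7$ and a triple point imposes $6$ conditions; it has arithmetic genus $3$.

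The paper obtains $D$, together with its $\Bbbk$-rationality, from the Bertini involution $\tau$ of $W=\Bl_PX$. This $W$ is a del Pezzo surface of degree $1$ precisely because $P$ is off the exceptional curves and off the ramification curve; this is where both hypotheses are used. The relation $E+\tau(E)\sim-2K_W$ gives $\tau(E)\sim\sigma^*(-2K_X)-3E$. Hence $D=\sigma(\tau(E))$ is birational to $\tau(E)\simeq E\simeq\Pp^1_\Bbbk$.

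One then runs the construction in a family over an open subset $U\subseteq D$:
\begin{itemize}
\item blow up the graph of $U\hookrightarrow X$ in $X\times U$;
\item apply the relative Bertini involution; this gives a rational surface $\mathcal E'$ whose fiber over $q$ maps onto the curve $D_q\in|-2K_X|$ with a triple point at $q$;
\item $\mathcal E'\to X$ is dominant because $D_q$ is singular at $q$, whereas $D$ is smooth at a general $q$.
\end{itemize}
This is the same scheme the paper uses in degree $3$, with the Geiser involution and $D\in|-K_X|$ of multiplicity $2$ at $P$.

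\textbf{Degree $4$ is also incomplete.} Here the theorem imposes no condition on $P$. ``A conic bundle over $\Pp^1$ with a $\Bbbk$-point is unirational'' is not a known result, so it cannot be used. What works:
\begin{itemize}
\item If $P$ lies on a single line, that line is defined over $\Bbbk$ and can be contracted to a degree-$5$ surface with a $\Bbbk$-point, which is rational.
\item If $P$ lies on two conjugate lines $L_1,L_2$, then $L_1+L_2$ and $-K_X-L_1-L_2$ define two conic bundles over $\Bbbk$. The fiber of the second one through $P$ is smooth, because $L_i$ meets that class with degree $1$ and so is not a fiber component. It is therefore a $\Pp^1_\Bbbk$ and a bisection of the first conic bundle, and base change along it gives a section.
\end{itemize}
Even when $P$ is off the lines, running your Segre argument on $\Bl_PX$ needs a $\Bbbk$-point off the lines, and the obvious ones lie on the line $E$. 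The paper avoids this by projecting from $E$ and using $E$ itself as a rational bisection of the resulting conic bundle.

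Your degree-$3$ variant with $C_P\times C_Q$ is correct, but its dominance should be proved rather than asserted.
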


\begin{corollary}
\label{corollary:dP2-real}
Suppose that $d=2$, $\Bbbk=\mathbb{R}$ and $X(\mathbb{R})\ne\varnothing$. Then $X$ is  unirational.    
\end{corollary}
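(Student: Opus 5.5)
The plan is to derive this corollary from Theorem~\ref{thm:prokhorov-unirationality}. It suffices to find a point $P\in X(\mathbb{R})$ that avoids the finitely many exceptional curves of $X_{\mathbb C}$ and also avoids the ramification curve $R$ of the anticanonical double cover $\kappa\colon X\to\Pp^2_{\mathbb R}$.

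\textbf{Step 1: reduce to Zariski density.} The excluded set $B$ is the union of the $56$ lines of $X_{\mathbb C}$ and the curve $R_{\mathbb C}$. It is a proper Zariski closed subset of $X_{\mathbb C}$, and it is stable under complex conjugation, so it is defined over $\mathbb{R}$. Hence it is enough to show that $X(\mathbb R)$ is not contained in any curve. Since $X$ is smooth and $X(\mathbb R)\ne\varnothing$, the implicit function theorem shows that $X(\mathbb R)$ is a real analytic manifold of dimension $2$ near any real point. A two-dimensional real manifold cannot lie inside a real algebraic curve, whose real locus has dimension at most $1$. Therefore $X(\mathbb R)$ is Zariski dense, and $X(\mathbb R)\setminus B\neq\varnothing$.

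\textbf{Step 2: apply the theorem.} Pick $P\in X(\mathbb R)\setminus B$. Both hypotheses of Theorem~\ref{thm:prokhorov-unirationality} for $d=2$ then hold. Since $\mathbb R$ is an infinite perfect field of characteristic zero, the theorem applies and $X$ is $\mathbb R$-unirational.

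\textbf{Expected obstacle.} The only genuinely nontrivial point is the density claim in Step 1: one must make sure that some real point is smooth, so that the local manifold structure is available. Here this is automatic, because $X$ is smooth. The remaining steps are a direct application of Theorem~\ref{thm:prokhorov-unirationality}.
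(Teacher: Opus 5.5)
Your proposal is correct and essentially identical to the paper's proof. Both remove the real curve formed by the ramification curve and the $56$ exceptional curves, note that $X(\mathbb R)$ is a nonempty smooth real surface while that curve meets it in a set of real dimension at most one, and then apply Theorem~\ref{thm:prokhorov-unirationality} at a real point off this curve.
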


\begin{proof}
Let $\Gamma$ be the~union of the~ramification curve of the~anticanonical double cover $X\to\mathbb{P}^2_{\mathbb{R}}$ and all exceptional curves in $X_{\mathbb C}$.
Then $\Gamma$ is a curve defined over $\mathbb R$.  
On the~other hand, $X(\mathbb R)$ is a~nonempty smooth two-manifold, whereas $\Gamma\cap X(\mathbb R)$ has real dimension at most $1$.
Hence, there is a~point $O\in X(\mathbb R)\setminus\Gamma$, so $X$ is $\mathbb{R}$-unirational by Theorem~\ref{thm:prokhorov-unirationality}.
\end{proof}

\begin{remark}
\label{remark:dP2-real}
If $d=2$, $\Bbbk=\mathbb{R}$ and $X(\mathbb{R})\ne\varnothing$, then it follows from \cite[Chapter~IV, Theorem~29.4]{ManinCubicForms} that there exists a~dominant rational map $\Pp^2_{\mathbb R}\dashrightarrow X$ of degree at most $24$.
\end{remark}

\begin{remark}
For $d=3$, the~condition that the~$\Bbbk$-rational point avoid the~geometric exceptional curves in Theorem~\ref{thm:prokhorov-unirationality} is unnecessary: see \cite[Chapter~IV, Theorem~30.1]{ManinCubicForms} and \cite[Theorem~1.1]{KollarCubic}. Similarly, for $d=2$, it follows from \cite[Corollary~3.3]{SalgadoTestaVarilly} that it is enough to assume that the~$\Bbbk$-rational point is not contained in the~ramification divisor of the anticanonical double cover, and it is not a~generalized Eckardt point, i.e., it is not the intersection point of four $(-1)$-curves.  
\end{remark}

In the remaining part of the appendix, we will prove Theorem~\ref{thm:prokhorov-unirationality}. We suppose that $X(\Bbbk)$ contains a~point $P$ that satisfies all conditions of  Theorem~\ref{thm:prokhorov-unirationality}. Let us show that the surface $X$ is unirational over~$\Bbbk$. We split the proof in several steps.

\subsection*{Birational extremal contractions.}
Suppose first that $X$ admits a~birational extremal contraction over $\Bbbk$.
Its  target is a~del Pezzo
surface of larger degree.  If a~geometric exceptional curve $C'$ on the~target passes through $m\geq1$ centers of the~contraction, then its strict
transform $\widetilde C'$ on $X$ satisfies
$-K_X\cdot\widetilde C'=1-m\leq0$, contrary to the~ampleness of $-K_X$.
Thus, every exceptional curve through the~image of~$P$ avoids the~centers,
and its strict transform is an exceptional curve through $P$, which is
impossible by hypothesis.
Repeating these contractions, we may assume that $X$ is minimal over $\Bbbk$.

\subsection*{Two conic bundles.}
Suppose that $\rho(X)>1$.  Since $X$ is minimal, every extremal contraction
is of fiber type.  Such a~contraction has relative Picard rank one and its
base has Picard rank one, so $\rho(X)=2$. The~two boundary rays therefore
give conic bundles
$$
 f_i\colon X\longrightarrow B_i,
 \qquad i=1,2.
$$
Each $B_i$ is a~smooth curve of genus zero containing $f_i(P)$, and, hence,
$B_i\simeq\Pp^1$.  Let~$C$ be the~fiber of~$f_2$ passing through~$P$.
A singular geometric fiber of a~conic bundle is a~union of two geometric
exceptional curves. Note that on a del Pezzo surface of degree $\ge 4$ 
there are at most two exceptional curves passing through it.
Thus, we may assume that $C$ is smooth, and $P\in C(\Bbbk)$ gives
$C\simeq\Pp^1$.  The~restriction $f_1|_C$ is nonconstant because the~two
extremal rays are distinct.  After the~base change $C\to B_1$, the~conic
bundle $f_1$ has a~section and its total space is rational.  This rational
surface dominates $X$, so $X$ is unirational.

\subsection*{The case when $d\ge 4$.}
We may now assume that $\rho(X)=1$.  If $d\geq5$, then $X$ is rational
by \cite[Chapter~IV, Theorem~29.4]{ManinCubicForms}.  Suppose that $d=4$,
and let
$$
 \sigma\colon W=\Bl_PX\longrightarrow X
$$
be the~blow-up, with exceptional curve $E$. The~standard blow-up criterion
for del Pezzo surfaces shows that $W$ is del Pezzo of degree $3$ because
$P$ lies on no geometric exceptional curve
\cite[Chapter~IV, \S24]{ManinCubicForms}. Thus, $W$ is a~smooth cubic
surface and $E$ is a~line defined over $\Bbbk$.  Projection from $E$
defines a~conic bundle $W\to\Pp^1$ so that $E$ is a~rational bisection.
Base change by this bisection gives a~rational surface dominating $W$, and,
hence, $X$ is unirational.

\subsection*{The case when $d=2$ or $d=3$.}
It remains to consider the~cases $d=3$ and $d=2$ with $\rho(X)=1$. The~surface $W=\Bl_PX$ is del
Pezzo of degree $d-1$ by the~same blow-up criterion.  For $d=2$, this is
where the~additional assumption that $P$ lie outside the~ramification
divisor is used.  Let $\tau: W\to W$ be the~Geiser involution  when
$d=3$ and the~Bertini involution when $d=2$.  These involutions are defined
over $\Bbbk$, and
\begin{equation}
\label{eq:LEtauE}
 E+\tau(E)\sim
 \begin{cases}
  -K_W,  & d=3,\\
  -2K_W, & d=2.
 \end{cases}
\end{equation}
The two birational morphisms $\sigma$ and $\sigma\circ\tau$ form the~corresponding 
symmetric Sarkisov link centered at~$P$.  
Let $E'$ be the~exceptional divisor of the~contraction $\sigma\circ \tau$
and let $D$ be its image under $\sigma$. Thus, $D=\sigma\bigl(\tau(E)\bigr)$.
Obviously, $E'\simeq \mathbb{P}^1$.
By \eqref{eq:LEtauE} we have $\tau(E)\neq E$.
Hence, $D$ is a  rational curve.  Again by \eqref{eq:LEtauE}, one has
$$
 \tau(E)\sim
 \begin{cases}
  \sigma^*(-K_X)-2E,  & d=3,\\
  \sigma^*(-2K_X)-3E, & d=2.
 \end{cases}
$$
Consequently,
$$
 D\in
 \begin{cases}
 |-K_X| & \text{with}\  \operatorname{mult}_P(D)=2
 \quad\text{if }d=3,
\\
 |-2K_X|&  \text{with}\   \operatorname{mult}_P(D)=3
 \quad\text{if }d=2.
 \end{cases}
$$
Now we consider a relative version of the~above construction.
Choose a~nonempty open subset $U\subseteq D_{\mathrm{sm}}$ disjoint from all
geometric exceptional curves and, when $d=2$, from the~ramification
divisor. The~curve $U$ is rational.  In the~constant family
$\mathcal X=X\times U$, let $\Delta$ be the~graph of the~inclusion
$U\hookrightarrow X$, and blow up this section:
$$
 \beta\colon\mathcal W=\Bl_\Delta\mathcal X\longrightarrow\mathcal X.
$$
For every geometric point $q\in U$, the~fiber $\mathcal W_q$ is del Pezzo
of degree $d-1$.  After shrinking
$U$, the~relative anticanonical morphism given by $|-(4-d)K_{\mathcal W}|$ in the~first case and the~relative
bianticanonical morphism in the~second case are finite flat double covers.
Their deck transformations therefore define an involution
$\tau_U\colon\mathcal W\to\mathcal W$ over $U$.  If $\mathcal E$ is the~exceptional divisor of $\beta$, then
$\mathcal E'=\tau_U(\mathcal E)$ is isomorphic to $\mathcal E$.  It is therefore rational, since
$\mathcal E$ is a~$\Pp^1$-bundle over the~rational curve $U$.
Consider the~morphism
$$
 g=\operatorname{pr}_X\circ\beta|_{\mathcal E'}
 \colon\mathcal E'\longrightarrow X.
$$
For every geometric point $q\in U$, the~image of the~fiber
$\mathcal E'_q$ is a~rational curve $D_q\subset X$. The~fiberwise divisor
relation gives
$$
 \operatorname{mult}_q(D_q)=
 \begin{cases}
  2, & d=3,\\
  3, & d=2.
 \end{cases}
$$
In particular, $q\in D_q$ and $D_q$ is singular at $q$.  Hence, the~closure
of $g(\mathcal E')$ contains $D$.
If $g$ were not dominant, this closure would equal $D$.  For general
$q\in U$, this would give $D_q=D$, which is impossible because $D$ is
smooth at $q$ whereas $D_q$ is singular there.  Thus, $g$ is dominant.
Since $\mathcal E'$ is rational, the~surface $X$ is unirational.
This completes the proof of Theorem~\ref{thm:prokhorov-unirationality}.

\end{document}